\documentclass[final,3p,times]{elsarticle}

\usepackage{amssymb}

\usepackage{amsfonts}
\usepackage{float}
\usepackage{comment}
\usepackage{amsmath}
\usepackage{color}
 \usepackage{amsthm}
  \usepackage{amsbsy}

 \newtheorem{theorem}{Theorem}[section]

 \newtheorem{lemma}[theorem]{Lemma}

\begin{document}


\begin{frontmatter}



\title{Total-coloring of planar graphs with maximum degree 6 and without prescribed 4-cycles}


\author[gzhu]{Enqiang Zhu\corref{correspondingauthor}}
\cortext[correspondingauthor]{Corresponding author.}
\ead{zhuenqiang@gzhu.edu.cn}
\author[dg]{Yangyang Zhou}
\author[dg]{Jin Xu}

\address[gzhu]{Institute of Computing Science and Technology, Guangzhou University, Guangzhou 510006, China}
\address[dg]{School of Electronics Engineering and Computer Science, Peking University, Beijing, 100871, China}


\begin{abstract}
The Total Coloring Conjecture (TCC) is a challenging unsolved problem posed by  Behzad   and Vizing  independently, which states that every simple graph $G$  admits a ($\Delta(G)$ +2)-total-coloring, where $\Delta(G)$ denotes the maximum degree of $G$.
This conjecture has been confirmed for graphs with $\Delta(G)\leq 5$. However, for planar graphs, the only open case is $\Delta(G)=6$. It was known that planar graphs with maximum degree 6 and without 4-cycles are 7-totally-colorable. In this paper, we improve this result by  showing that  any planar graph $G$ of maximum degree 6, which does not contain some special 4-cycles, is 7-totally-colorable.
\end{abstract}

\begin{keyword}
Planar graph \sep  Total coloring\sep 4-cycle



\end{keyword}

\end{frontmatter}


\section{Introduction}

All graphs considered in this paper are  simple, finite and undirected, and  for the terminologies and notations not defined here we follow  \cite{Bondy2008,shao1}. For any graph $G$, we denote by $V(G),E(G),\Delta(G)$ and $\delta(G)$ the \emph{vertex set}, the \emph{edge set}, the \emph{maximum degree} and the \emph{minimum degree} of $G$, respectively. For any vertex $v$ in $G$, a vertex $u\in V(G)$ is said to be a neighbor of $v$ if $uv\in E(G)$. We use $N_G(v)$ to denote the set of neighbors of $v$.
The degree of $v$ in $G$, denoted by $d_G(v)$, is the number of neighbors of $v$ in $G$, i.e. $d_G(v)=|N_G(v)|$. A $k$-vertex, a $k^-$-vertex or a $k^+$-vertex is a vertex of degree $k$, at most $k$ or at least $k$. A $k$ ($k^-$ or $k^+$)-neighbor of a vertex $v$ is a neighbor  of $v$ with degree $k$ (at most $k$ or at least $k$). A \emph{$k$-cycle} is a cycle of length $k$. A chord of a cycle is an edge between two vertices of the cycle that is not an edge of the cycle. A cycle is called as chord-cycle if it has at least one chord. Obviously, if a 4-cycle is a chord-cycle, then it contains two adjacent 3-cycle, where two cycles are \emph{adjacent} if they share a common edge.



 A $k$-$total$-$coloring$ of a graph $G$ is a mapping $f$ from $V(G)\cup E(G)$ to the set of colors $\{1,2,\ldots,k\}$ such that $f(x)\neq f(y)$ for every pair of adjacent or incident elements $x,y\in V(G)\cup E(G)$. $G$ is $k$-$totally$-$colorable$ if it admits a $k$-total-coloring. The \emph{total chromatic number} of a graph $G$, denoted by  $\chi_t(G)$, is the smallest integer $k$ such that $G$ is $k$-totally-colorable. It is clear that $\chi_t(G)\geq \Delta(G)+1$. Behzad \cite{Behzad1965} and Vizing \cite{Vizing1968} independently conjectured that $\chi_t(G)\leq \Delta(G)+2$ for every simple graph $G$. This conjecture  has been confirmed for graphs with $\Delta\leq 5$ \cite{Kostochka1996}, and for planar graphs with $\Delta\geq 7$ \cite{Borodin1989} \cite{Jensen1995} \cite{Sanders1999}. Therefore, the only open case for planar graphs is $\Delta=6$. In 2009, 
 Lan Shen and Yingqian Wang \cite{Shen2009} proved that planar graphs with maximum degree 6 and without 4-cycles are 7-totally-colorable. In this paper, we consider planar graphs  that contain 4-cycles, and give a stronger statement:


\begin{theorem}\label{mainresult}
Let $G$ be  a planar graph of maximum degree  $6$. Then, $G$ has a 7-total-coloring if $G$ does not contain any 4-cycle $C$ such that  $C$ is a chord-cycle, or $C$ is adjacent to a 3-cycle, or  $C$ is incident with a 6-vertex $v$ and $C$ is  adjacent to a $5^-$-cycle that is incident with $v$.
\end{theorem}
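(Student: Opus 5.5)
We argue by contradiction with a minimal counterexample and discharging, following the approach of Shen and Wang \cite{Shen2009}. Let $G$ be a counterexample with $|V(G)|+|E(G)|$ minimum, embedded in the plane. Every proper subgraph of $G$ still satisfies the hypothesis on 4-cycles, and has maximum degree at most 6; if that degree drops below 6 we use the known bound for $\Delta\le 5$ \cite{Kostochka1996}, which still yields a 7-total-coloring. Hence every proper subgraph of $G$ is 7-totally-colorable. From this we first derive \emph{reducible configurations}, each established by deleting an edge or vertex, coloring the rest, and extending with 7 colors, using recoloring or Hall-type arguments when a direct count fails. The configurations we expect to prove are:
\begin{itemize}
\item[(a)] $G$ is 2-connected and $\delta(G)\ge 2$.
\item[(b)] If $uv\in E(G)$ with $d(u)\le 3$, then $d(u)+d(v)\ge 8$. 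In particular a 2-vertex has only 6-neighbors, and a 3-vertex has only $5^+$-neighbors.
\item[(c)] A 6-vertex is adjacent to at most a bounded number of 2-vertices, and a 2-vertex on a 3-cycle forces structure on its neighbors.
\item[(d)] Restrictions on 3-faces incident with $4^-$-vertices, e.g.\ a 3-face $uvw$ with $d(u)=4$ requires $d(v),d(w)$ large. There are also the usual ``$(3,5^-,\dots)$''-type forbidden configurations around a 5-vertex.
\end{itemize}

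Next, we set charges $\mathrm{ch}(v)=2d(v)-6$ for vertices and $\mathrm{ch}(f)=d(f)-6$ for faces. By Euler's formula the total charge is $-12$. Only 3-, 4- and 5-faces and $2$-vertices are negative, since $3$-vertices start at $0$. The hypothesis makes the local structure of small faces sparse:
\begin{itemize}
\item No 4-face is adjacent to a 3-face.
\item A 4-face has no chord, so its boundary is a genuine 4-cycle.
\item At a 6-vertex $v$, any 4-face at $v$ shares no edge with another $5^-$-face at $v$.
\end{itemize}
Hence around a 6-vertex the faces alternate so that 4-faces are isolated among $6^+$-faces, while 3-faces may still be consecutive. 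This is the key point used to show 6-vertices have enough surplus ($+6$) to pay. The discharging rules are:
\begin{itemize}
\item[(R1)] Each 6-vertex sends 1 to each incident 2-vertex neighbor.
\item[(R2)] Each $4^+$-vertex sends to each incident 3-face an amount depending on the degrees of the other vertices on the face, with values $1$ or $1/2$ typical, and $3/2$ to a face with a $3$-vertex.
\item[(R3)] Each $4^+$-vertex sends $1/2$ to each incident 4-face and $1/4$ to each incident 5-face.
\end{itemize}
The exact amounts will be tuned. We then verify final charges: faces by the degree restrictions from (b),(d), and vertices by counting how many $5^-$-faces can surround them.

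The main obstacle is the 6-vertex check, and secondarily 5-vertices. A 6-vertex may be incident with up to six 3-faces, possibly also 2-vertex neighbors, and must pay all of them with charge 6. Here the third hypothesis (no 4-cycle at a 6-vertex adjacent to a $5^-$-cycle through it) must be used carefully. The reason is that a 4-face at $v$ is then flanked by $6^+$-faces, so it costs $v$ only when paired with cheap neighbors. I would split into cases by the number $t$ of incident 3-faces and the number of 2-neighbors.
\begin{itemize}
\item When $t$ is large, I will prove additional reducible configurations bounding how many light vertices lie on those 3-faces, and let 3-faces with all $5^+$ vertices take only $1$ each.
\item A 2-neighbor's incident faces through $v$ must be long, by (b),(c) and the 4-cycle condition, which frees charge.
\end{itemize}
If the direct count fails in some subcase, I will add a rule letting 3-faces receive from adjacent large faces or from $6$-vertices at distance two.

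The reducibility of the 3-face configurations with a 4-vertex will likely need a recoloring argument, swapping two colors along an edge or doing a Kempe-like exchange at a 6-vertex. I expect this to be the most delicate coloring step.
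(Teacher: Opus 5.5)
There is a genuine gap. What you have written is a plan rather than a proof: the discharging amounts are left ``to be tuned'', configuration (c) is vacuous (a 6-vertex trivially has a bounded number of 2-neighbours), and (d) is unsupported. The paper never shows that a 3-face through a 4-vertex forces large degrees; it keeps $(4,4,5^+)$-faces and pays them via r3.3.

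The step that concretely fails is R1. The only reducibility available for 2-vertices is that the edges joining 2-vertices to 6-vertices form a forest (Lemma~\ref{lemma1}(2)), so a 6-vertex may have several 2-neighbours. Take a 6-vertex $v$ with neighbours $u_1,u_2,u_3,u_4,x,y$ in cyclic order. Each $u_i$ is a 2-vertex whose other neighbour $w_i$ is a 6-vertex, and $w_1w_2w_3w_4$ is a path. The faces at $v$ are the 5-faces $vu_iw_iw_{i+1}u_{i+1}$ $(i=1,2,3)$, $vu_4w_4zx$ and $vyz'w_1u_1$, together with the $(6,6,6)$-triangle $vxy$. This violates neither the hypothesis nor any configuration you or the paper prove. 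With your charges $2d(v)-6$ and $d(f)-6$, the vertex $v$ pays $4$ by R1, $1$ to $vxy$, and $5\cdot\frac{1}{4}$ by R3, ending at $-\frac{1}{4}$. Meanwhile each 5-face $vu_iw_iw_{i+1}u_{i+1}$ collects only $\frac{3}{4}$ of the $1$ it needs. Raising the uniform 5-face amount only makes $v$ worse, and since every $5^-$-face is negative in your scheme, no nearby face can help.

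The idea you are missing is the paper's \emph{master} construction. A matching in the 2--6 forest saturating all 2-vertices makes each 6-vertex pay a full unit to at most one 2-vertex. The other unit a 2-vertex needs comes as $\frac{1}{2}$ from each incident $4^+$-face (r1.2). This is affordable because with charges $d-4$ a 5-face starts at $+1$ and a 4-face at $0$.

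You have also misread the hypothesis in a way that hides the main structural simplification. Two 3-faces $vab$ and $vbc$ sharing the edge $vb$ give the 4-cycle $vabc$ with chord $vb$, which is excluded. So no two 3-faces are adjacent. Combined with the condition on 4-cycles at 6-vertices, this gives $n_3+n_4\le 3$ at every 6-vertex, which is what the paper's 6-vertex count rests on. Your statements that 3-faces around a 6-vertex ``may still be consecutive'' and that a 6-vertex may lie on six 3-faces are false, and the extra reducible configurations you plan for large $t$ are unnecessary.

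Conversely, the configurations that are actually needed are absent from your list:
\begin{itemize}
\item no $(4,4,4)$-face, by total 3-choosability of a triangle (Lemma~\ref{lemma1}(3));
\item a 6-vertex on a $(2,6,6)$-face has no other 2-neighbour, proved by an explicit recolouring (Lemma~\ref{lemma2});
\item a 4-face carries at most one 2-vertex (Lemma~\ref{lemma3});
\item no $(3,5,3,5)$-face (Lemma~\ref{lemma4}).
\end{itemize}
These, together with the face-to-6-vertex transfers of r5 quantified in Lemmas~\ref{chargeto6-vertex1}--\ref{chargeto6-vertex}, are what close the hardest 6-vertex case (two 2-neighbours plus three $4^-$-faces).
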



The proof of this theorem runs as follows: We first explore some properties  of a minimal counterexample, and then we will obtain a contradiction through a discharging method.

\section{Reducible configurations}

Let $f$ be a $k$-total-coloring of a planar graph $G$. We use $\{1,2,\ldots,k\}$ to denote the color set with $k$ colors. For a vertex $v\in V(G)$, we use  $C_{f}(v)$ to denote the set of colors appearing on $v$ and the edges incident with $v$.
For a planar graph $G$, we always assume $G$ is embedded in the plane, and denote by $F(G)$ the set of faces of $G$. The degree of a face $f\in F(G)$, denoted by $d_G(f)$, is the number of edges incident with it, where each cut-edge is counted twice. A face of degree $k$ is called a \emph{$k$-face}. A $k$-face, denoted by $v_1v_2\ldots v_k$, with consecutive vertices $v_1,v_2,\ldots,v_k$ along its boundary in some direction is often said to be a $(d_G(v_1), d_G(v_2),\ldots,d_G(v_k))$-$face$.

Let $H$ be a  minimal counterexample to Theorem \ref{mainresult} in terms of the number of vertices and edges.  Then, $H$ has the following properties:

 (1) $H$ is a planar graph of maximum degree  6;

 (2) $H$ does not contain any 4-cycle $C$ such that $C$ is a chord-cycle, or $C$ is adjacent to a 3-cycle, or  $C$ is incident with a 6-vertex $v$ and $C$ is  adjacent to a $5^-$-cycle that is incident with $v$;

 (3) $H$ is not 7-totally-colorable.

Furthermore, since every planar graph with $\Delta\leq 5$ is totally 7-colorable \cite{Kostochka1996} and every subgraph of $H$ also satisfies the two conditions of Theorem \ref{mainresult}, it follows that every proper subgraph of $H$  is 7-totally-colorable. We first show some known results.

\begin{lemma}\label{lemma1}
 $(1)$ For every edge $uv\in E(H)$, if $d_H(u)\leq 3$ or $d_H(v)\leq 3$, then $d_H(u)+d_H(v)\geq 8$.

 $(2)$ The subgraph induced by all edges, whose two ends are 2-vertex and 6-vertex respectively in $H$ is a forest.

 $(3)$ $H$ has no (4,4,4)-face.
\end{lemma}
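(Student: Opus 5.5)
We prove the three parts in turn, each by the standard reducibility argument. Delete a suitable edge or small configuration from $H$, 7-totally-color the resulting proper subgraph (possible by minimality), and extend the coloring back to $H$, contradicting property (3). We use two facts throughout. First, removing an edge keeps all hypotheses of Theorem \ref{mainresult}. Second, if $\Delta$ drops below 6, the resulting graph is still 7-totally-colorable by \cite{Kostochka1996}.

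For (1), suppose $uv\in E(H)$ with $d_H(u)\le 3$ and $d_H(u)+d_H(v)\le 7$. Take a 7-total-coloring $\phi$ of $H-uv$ and first erase the color of $u$. The edge $uv$ is adjacent or incident to at most $(d_H(u)-1)+(d_H(v)-1)+1\le 6$ colored elements: the other edges at $u$, the other edges at $v$, and the vertex $v$. So some color among the 7 remains for $uv$. Then $u$ must avoid its $d_H(u)\le 3$ incident edges and its $d_H(u)\le 3$ neighbors, at most 6 colors, so $u$ can be recolored. This contradicts (3).

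For (2), suppose the subgraph of $(2,6)$-edges contains a cycle $C$. It is even and alternates 2-vertices $u_1,\dots,u_k$ and 6-vertices. Delete all the 2-vertices $u_i$ on $C$ (or just their edges), color the remainder, and erase nothing else. Each edge of $C$ joining a 2-vertex to a 6-vertex $w$ sees the 5 colored elements at $w$ other than itself plus $w$'s color. Uncolored edges at $w$ do not count, so such an edge has at least $7-6=1$ available color when both $C$-edges at $w$ are uncolored. Hence each edge of $C$ has a list of size at least 2 once we account only for the colored elements: specifically, $w$ has 4 colored edges plus its own color, i.e. 5 forbidden colors, and the 2-vertex $u_i$ is still uncolored. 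The edges of $C$ form an even cycle in the line graph, and even cycles are 2-choosable, so they can be colored from their lists. Finally each 2-vertex $u_i$ sees 2 edges and 2 neighbors, at most 4 colors, and is colored last. Hence the $(2,6)$-subgraph is acyclic.

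For (3), let $xyz$ be a $(4,4,4)$-face. Remove the three triangle edges, color $H-\{xy,yz,zx\}$, and erase the colors of $x,y,z$. Each triangle edge then sees the 2 other colored edges at each end, at most 4 colors, so it has a list of at least 3 colors. A triangle with lists of size 3 is edge-colorable. Each vertex, e.g. $x$, then sees 4 incident edges and its 1 outside neighbor, which fixes at most 5 colors, leaving at least 2 available. The three triangle vertices are mutually adjacent, so we need lists of size 3 for a greedy coloring. The main obstacle is therefore this last step. We handle it by coloring the vertices before the edges. With the triangle edges uncolored, each vertex sees its 2 outside edges and 2 outside neighbors, at most 4 colors, so it has a list of at least 3; a triangle is 3-choosable, so $x,y,z$ can be colored. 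Then edge $xy$ sees 2 outside edges at $x$, 2 at $y$, and the colors of $x$ and $y$, at most 6, so it keeps at least 1 color. To get room for the remaining triangle edges, we instead argue via the total graph of $K_3$ with pendant constraints. The total graph of the triangle is $K_6$-like, but the list sizes are vertex $\ge 3$ and edge $\ge 3$ (counting only pre-colored outside elements). We then apply the known fact that the total graph of $K_3$, i.e. $K_6$ minus a perfect matching, is 3-choosable: it is the octahedron, and $K_{2,2,2}$ is 3-choosable. This yields the extension and a contradiction.
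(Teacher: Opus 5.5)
Your proof is correct and follows essentially the same route as the paper: extend a coloring of $H-uv$ after erasing $u$ in (1); in (2), give each $C$-edge a list of size $7-5=2$, use 2-choosability of the even cycle $L(C)$, then color the 2-vertices greedily; in (3), give all six triangle elements lists of size at least $3$ and use 3-choosability of the total graph $K_{2,2,2}$, which is exactly the paper's ``every 3-cycle is totally 3-choosable.'' You should delete the discarded miscounts, which contradict your final counts: in (2), the ``$7-6=1$'' line and the ``(or just their edges)'' option, since that option leaves the 2-vertices colored unless you erase them; in (3), the ``1 outside neighbor'' remark, since $x$ has two outside neighbors.
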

\begin{proof}
For (1), we suppose that $d_H(u)\leq 3$ and $d_H(u)+d_H(v)\leq 7$.  By the minimality of $H$, $H-\{uv\}$ has a 7-total-coloring $f$. Since $d_H(u)+d_H(v)\leq 7$, it follows that $d_{H-\{uv\}}(u)+d_{H-\{uv\}}(v)\leq 5$ and $|C_f(u)\cup C_f(v)|\leq 7$. Now, erase the color of $u$. Then, edge $uv$ can be colored with a color in $\{1,2,\ldots, k\}\setminus (C_f(v)\cup(C_f(u)\setminus \{f(u)\}))$. Therefore, we can obtain a 7-total-coloring of $H$ after coloring $u$ with an available color (since we assume that $d_H(u)\leq 3$),  a contradiction.

For (2), let $G'$ be the subgraph induced by all edges whose two ends are 2-vertex and 6-vertex, respectively. If $G'$ contains a cycle, say $C=u_1u_2\ldots u_ku_1$, then by (1)  $k \equiv 0$ (mod 2). Without loss of generality, we assume that $d_H(u_i)=2$ for $i\equiv 1$ (mod 2) and $d_H(u_j)=6$ for $j\equiv 0$ (mod 2), $i,j\in \{1,2,\ldots,k\}$. By the minimality of $H$, $H-E(C)$ has a 7-total-coloring. Erase the colors of $u_i$ for $i=1,2,\ldots, k$.  Then, each edge in $E(C)$ has two available colors. Therefore, we can obtain a 7-total-coloring of $H$, by first coloring edges in $E(C)$ properly (since every  cycle of even length is edge-2-choosable) and then coloring each 2-vertex $u_i$, $i=1,2,\ldots,k$, with an available color, a contradiction.

For (3), suppose that $H$ has a (4,4,4)-face $u_1u_2u_3$. By the minimality of $H$, $H-\{u_1u_2,u_2u_3,u_3u_1\}$ has a 7-total-coloring. Erase the colors of $u_1, u_2$ and $u_3$. Then, each element in $\{u_1,u_2,u_3,u_1u_2,u_2u_3,u_3u_1\}$ has at least three available colors. Since every 3-cycle is
totally 3-choosable, $H$ has a 7-total-coloring, and a contradiction. \qed
\end{proof}

Lemma \ref{lemma1} (1) indicates that $H$ does not contain any 1-vertex.
For any component $T$ of the forest stated in Lemma \ref{lemma1} (2),  it is easy to see that all leaves (i.e. 1-vertices) of $T$ are 6-vertices. Thus, $T$ has a maximum matching $M$ that saturates  every 2-vertex in $T$. For each 2-vertex $v$ in $T$, we refer to the neighbor of $v$ that is saturated by $M$ as the \emph{master} of $v$ (see \cite{Borodin1997}). Obviously, for a given $M$, each 6-vertex can be the master of at most one 2-vertex, and each 2-vertex has exactly one master.

\begin{lemma}\label{lemma3}
Every 4-face in $H$ is incident with at most one 2-vertex.
\end{lemma}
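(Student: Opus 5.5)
The plan is to combine the two parts of Lemma \ref{lemma1}: part (1) fixes the positions and the degrees of the neighbours of the 2-vertices on the face, and then part (2) forbids the resulting cycle.

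Let $f=u_1u_2u_3u_4$ be a 4-face of $H$, and suppose it is incident with two 2-vertices. First I check that the boundary of $f$ is a genuine 4-cycle. By Lemma \ref{lemma1}(1), $H$ has no 1-vertex. So a facial walk of length 4 that is not a cycle would have to repeat a vertex, say $u_1=u_3$. Since the walk has length 4 and $H$ has no 1-vertex, this forces either a pair of parallel edges or a cut-edge traversed twice whose outer end is pendant. Both are impossible in a simple graph with minimum degree at least 2, so $u_1,u_2,u_3,u_4$ are distinct and $C=u_1u_2u_3u_4u_1$ is a 4-cycle.

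Next I locate the two 2-vertices. Two 2-vertices cannot be adjacent, because then $d_H(u)+d_H(v)=4<8$, contradicting Lemma \ref{lemma1}(1). Hence they are opposite on $C$, say $d_H(u_1)=d_H(u_3)=2$. Applying Lemma \ref{lemma1}(1) to the edges at a 2-vertex gives that every neighbour of a 2-vertex has degree at least $6$. Since $\Delta(H)=6$, this means $d_H(u_2)=d_H(u_4)=6$.

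Finally, each of the four edges $u_1u_2,u_2u_3,u_3u_4,u_4u_1$ joins a 2-vertex to a 6-vertex. So $C$ is a cycle in the subgraph induced by the 2--6 edges, which contradicts Lemma \ref{lemma1}(2). This proves the statement.

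The only point needing care is the first step, ruling out degenerate facial walks. All the degree bookkeeping afterwards is immediate from Lemma \ref{lemma1}.
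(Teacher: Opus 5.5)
Your proof is correct and is the argument the paper intends. The paper just says the lemma follows from Lemma \ref{lemma1}(1) and (2), and you supply exactly those details: the 2-vertices are non-adjacent, hence opposite, the other two vertices are 6-vertices, and so the face boundary is a cycle of 2--6 edges. Your extra check that the boundary is a genuine 4-cycle is a harmless refinement the paper leaves implicit; it tacitly uses that the face boundary is a single closed walk, which holds because $H$ is connected by minimality.
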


\begin{proof}
The result follows straightforwardly from Lemma \ref{lemma1} (1) and (2).  \qed
\end{proof}

\begin{lemma}\label{lemma2}
Let $v_1v_2v_3$ be a (2,6,6)-face of $H$, where $d_H(v_1)=2, d_H(v_2)=d_H(v_3)=6$. Then, $v_2$ (resp. $v_3$) has no 2-neighbor different from $v_1$.
\end{lemma}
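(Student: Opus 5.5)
The plan is a reducibility argument by contradiction. Suppose $v_2$ has a 2-neighbor $u\neq v_1$, and let $w$ be the other neighbor of $u$. By Lemma \ref{lemma1} (1), $d_H(w)=6$. Note that $u\neq v_3$, since $d_H(v_3)=6$. I would delete the two edges $v_1v_2$ and $v_2u$. By minimality, $H'=H-\{v_1v_2,v_2u\}$ has a 7-total-coloring $f$. Then I erase the colors of the 2-vertices $v_1$ and $u$. Each of them, once both of its edges are colored, sees at most four forbidden colors (two edges and two neighbors), so it can always be recolored at the very end. So only the two edges $v_1v_2$ and $v_2u$ matter.

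In $H'$, $v_2$ sees its vertex color and four edge colors. So exactly two colors $\alpha,\beta$ are missing at $v_2$. Edge $v_1v_2$ may use any color in $\{\alpha,\beta\}\setminus\{f(v_1v_3)\}$, and $v_2u$ any color in $\{\alpha,\beta\}\setminus\{f(uw)\}$. Each list has at least one color, and the two edges are adjacent, so they need distinct colors. A short check shows this succeeds unless both lists are the same singleton, that is, $f(v_1v_3)=f(uw)\in\{\alpha,\beta\}$, say both equal $\alpha$. In that case both edges can only take $\beta$.

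The remaining case, $f(v_1v_3)=f(uw)=\alpha$, is where I expect the real work to be. First I would try to recolor $v_1v_3$ or $uw$ away from $\alpha$. Erasing $v_1v_3$, the vertex $v_3$ (a 6-vertex) sees six colors, and the color missing there is exactly $\alpha$, so $v_1v_3$ is forced and this does not help. The same happens for $uw$ at the 6-vertex $w$. Next I would use the triangle together with the vertex color $\gamma=f(v_2)$: color $v_2u$ with $\gamma$ and $v_1v_2$ with $\beta$, and recolor $v_2$ with $\alpha$. This is legal unless some neighbor of $v_2$ (other than the erased $v_1,u$) has vertex color $\alpha$; the symmetric variant assigns $\gamma$ to $v_1v_2$ and $\beta$ to $v_2u$.

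If a neighbor of $v_2$ is colored $\alpha$, I would do an $\alpha/\beta$-exchange. One option is to swap the colors $\alpha$ and $\beta$ on a suitable edge at $v_3$ or at $w$, which is possible because $\beta$ is present at $v_3$ and at $w$. Another is a Kempe-type recoloring along the $\alpha\beta$-path starting at $v_1v_3$ or at $uw$. The exchange is chosen so that afterwards $f(v_1v_3)\neq f(uw)$, which returns us to the easy case. The delicate point, and the one I would check most carefully, is that such a path cannot end at $v_2$ through both edges at once. Here I would use that $\alpha$ and $\beta$ are both missing at $v_2$, so no $\alpha\beta$-path passes through $v_2$. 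Hence the path from $v_1v_3$ and the path from $uw$ can be handled separately, and recoloring one of them breaks the equality $f(v_1v_3)=f(uw)$.
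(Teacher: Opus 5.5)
There is a genuine gap. Your reduction is correct up to the bad case $f(v_1v_3)=f(uw)=\alpha$, where both edges are forced to $\beta$. The Kempe step that is supposed to handle this case does not work.

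\textbf{The chain argument.} The fact that no $\alpha\beta$-path passes through $v_2$ does not separate the chain at $v_1v_3$ from the chain at $uw$. Because $v_1v_2$ and $v_2u$ are uncolored, both $v_1$ and $u$ are endpoints of $\alpha\beta$-edge-paths. Nothing prevents a single path $v_1v_3\cdots wu$, colored $\alpha,\beta,\dots,\beta,\alpha$, from joining them while avoiding $v_2$. The hypotheses do not exclude this; for example, it could close a 6-cycle $v_2v_3yzwu$. Swapping that path only produces $f(v_1v_3)=f(uw)=\beta$.

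\textbf{Vertex colors.} In a total coloring, an edge-Kempe swap can be blocked by a vertex color at the far end of the path. Every color appears at the 6-vertex $v_3$, but $\beta$ may be the vertex color $f(v_3)$ rather than an edge color. In that case the path from $v_1$ is just $v_1v_3$, and recoloring it $\beta$ is illegal; the same can happen at $w$. Swapping $\alpha,\beta$ on just two edges at $v_3$ also creates a conflict at the other end, unless you swap the whole chain.

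\textbf{The fallback.} Recoloring $v_2$ with $\alpha$ fails whenever a neighbor of $v_2$ is colored $\alpha$. So none of your moves covers all cases.

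\textbf{The missing idea.} Use the triangle edge $v_2v_3$, which is what the paper does. In your setup, let $c=f(v_2v_3)$ and interchange the colors of $v_1v_3$ and $v_2v_3$. This is legal: $\alpha$ is missing at $v_2$, $f(v_2)\neq\alpha$, $f(v_3)\notin\{\alpha,c\}$, and the swap only permutes colors at $v_3$. Now $\beta$ and $c$ are missing at $v_2$. Color $v_1v_2$ with $\beta$ (it differs from $c=f(v_1v_3)$) and $v_2u$ with $c$ (it differs from $\alpha=f(uw)$). Finally recolor the 2-vertices $v_1$ and $u$ as you planned.

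\textbf{The paper's version.} The paper deletes only $v_1v_2$, so $v_2$ misses a single color $7$. If $f(v_1v_3)=7$ and $f(uw)\neq 7$, it moves $f(v_2u)$ to $v_1v_2$ and puts $7$ on $v_2u$. If $f(uw)=7$, it first performs exactly this interchange on the triangle.
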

\begin{proof}
We are sufficient to show that the result holds for $v_2$. Suppose that $N_H(v_2)\setminus \{v_1\}$ contains a 2-vertex, say $u$. By the minimality of $H$, $H-v_1v_2$ has a 7-total-coloring $f$. Erase the colors of $v_1$ and $u$.  Without loss of generality, we assume $C_f(v_2)$=$\{1,2,3,4,5,6\}$. If $f(v_1v_3)\neq 7$, then edge $v_1v_2$ can be properly colored by 7. Since $v_1,u$ are 2-vertices, there are at least three available colors for each of them. Hence, $H$ has a 7-total-coloring, and a contradiction. So we assume $f(v_1v_3)=7$. Denote by $w$ the other 6-neighbor (different from $v_2$) of $u$. When $f(uw)\neq 7$, we can  color $v_1v_2$ with $f(v_2u)$ and recolor $v_2u$ with 7. When $f(uw)=7$,  we can first interchange the colors of $v_1v_3$ and $v_2v_3$, and then color $v_1v_2$ with $f(v_2u)$ and recolor $v_2u$ with $f(v_2v_3)$. Clearly, based on the above discussion, we can obtain a 7-total-coloring of $H$ by coloring $v_1,u$ with two available colors. This contradicts the assumption of $H$.  \qed
\end{proof}

\begin{lemma}\label{lemma4}
$H$ has no (3,5,3,5)-face.
\end{lemma}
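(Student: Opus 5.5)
Suppose, for contradiction, that $H$ contains a (3,5,3,5)-face $f=v_1v_2v_3v_4$ with $d_H(v_1)=d_H(v_3)=3$ and $d_H(v_2)=d_H(v_4)=5$. The plan is the usual reducibility argument. We delete the four edges of the face, take a 7-total-coloring of the remainder, erase the colors of the two 3-vertices, and then extend the coloring to the cycle edges and the erased vertices by a list-coloring argument.

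The first step is to take, by the minimality of $H$, a 7-total-coloring $\varphi$ of $H'=H-\{v_1v_2,v_2v_3,v_3v_4,v_4v_1\}$. Then we erase the colors of $v_1$ and $v_3$. Each 3-vertex $v_i$ ($i=1,3$) can be recolored at the very end. It has $3$ incident edges and $3$ neighbors, so at most $6$ colors are forbidden and one color is always free. Hence we need only color the four cycle edges.

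Next we count the available colors for each edge. Take $e=v_1v_2$. In $H'$ the vertex $v_1$ has one incident colored edge and no vertex color, so it forbids $1$ color. The vertex $v_2$ has $3$ incident colored edges plus its own color, so it forbids $4$ colors. Thus $e$ has at least $7-5=2$ available colors, and the same count holds for the other three edges. We therefore have an even cycle $C_4$ in which every edge has a list of size at least $2$. Every even cycle is edge-$2$-choosable, the same fact used in Lemma \ref{lemma1}(2), so the four edges can be colored properly from their lists.

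Finally we color $v_1$ and $v_3$. After the edges are colored, each of these vertices sees $3$ edge colors and $3$ neighbor colors. This excludes at most $6$ colors, so at least one color remains free. The resulting coloring is a 7-total-coloring of $H$, which is a contradiction.

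The only point needing care is the edge count. The faces adjacent to $f$ may share vertices, but this does not matter, because the counts above are worst-case upper bounds on the number of forbidden colors. Edge-2-choosability of $C_4$ is classical: the line graph of $C_4$ is $C_4$, which is 2-choosable. So I expect no genuine obstacle; the main check is that $v_2$ and $v_4$, being $5$-vertices, leave exactly enough room, namely $3$ edges plus the vertex color plus the $1$ colored edge at $v_1$, giving $5$ forbidden colors out of $7$.
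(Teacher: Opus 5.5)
Your proof is correct and is essentially the paper's own argument. You delete the four face edges and take a 7-total-coloring of the rest by minimality, then erase the colors of $v_1,v_3$. Each cycle edge keeps at least $7-5=2$ available colors, the edges are colored using edge-2-choosability of even cycles, and the two 3-vertices are recolored last. Your explicit forbidden-color counts (one edge at the 3-vertex, three edges plus the vertex color at the 5-vertex, and six constraints on a 3-vertex at the end) spell out what the paper asserts without detail.
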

\begin{proof}
Suppose that $H$ has a (3,5,3,5)-face, say $q=v_1v_2v_3v_4$, where $d_H(v_1)=d_H(v_3)=3$,  $d_H(v_2)=d_H(v_4)=5$. By the minimality of $H$, $H-\{v_1v_2,v_2v_3,v_3v_4,v_4v_1\}$ has a 7-total-coloring $f$. Erase the colors of $v_1,v_3$. Then, each edge of $\{v_1v_2,v_2v_3,v_3v_4,$ $v_4v_1\}$ has at least two available colors. Since even cycles are 2-edge-choosable, we can properly color edges $v_1v_2,v_2v_3,v_3v_4$ and $v_4v_1$. Thus, after coloring $v_1$ and $v_3$ with two available colors, we obtain a 7-total-coloring of $H$, and a contradiction. Hence, $H$ does not contain any (3,5,3,5)-face.  \qed
\end{proof}

\section{Discharging}
We shall complete the proof of Theorem \ref{mainresult} by  using discharging procedure to derive a contradiction.  By Euler's formula $|V(H)|-|E(H)|+|F(H)|=2$, we have

\[
\sum\limits_{v\in V(H)}(d_H(v)-4)+\sum\limits_{f\in F(H)}(d_H(f)-4)=-8<0.
\]

We define \emph{ch} to be the \emph{initial charge}. Let $ch(v)=d_H(v)-4$ for each $v\in V(H)$ and $ch(f)=d_H(f)-4$ for each $f\in F(H)$. It follows that $\sum_{x\in (V(H)\cup F(H))}ch (x)<0$. Now, we will reassign a new charge denoted by  $ch'(x)$ to each $x\in (V(H)\cup F(H))$ according to the discharging rules below. If we can show that $ch'(x)\geq 0$ for each $x\in (V(H)\cup F(H))$, then we obtain a contradiction, and complete the proof.

Our discharging rules are as follows.


(r1) \emph{Charge to a 2-vertex $v$}

(r1.1) When $v$ is incident with a 3-face $T$, $v$ gets $\frac{1}{2}$ from each of its 6-neighbors, and receives 1 from its incident $5^+$-face.

(r1.2) When $v$ is not incident with any 3-face, $v$ receives 1 from its master, and gets $\frac{1}{2}$ from each of its incident $4^+$-face.

(r2) \emph{Charge to a 3-vertex $v$}

(r2.1) When $v$ is incident with a 3-face, it receives $\frac{1}{2}$ from each of its incident $5^+$-faces.

(r2.2) When $v$ is not incident with any 3-face, it gets $\frac{1}{3}$ from each of its incident $4^+$-faces.

(r3) \emph{Charge to a 3-face $f$}

(r3.1) When $f$ is a ($5^+$,$5^+$,$5^+$), it receives $\frac{1}{3}$ from each of its incident $5^+$-vertices.

(r3.2) When $f$ is  incident with exactly one $4^-$-vertex, it receives $\frac{1}{2}$ from each of its incident $5^+$-vertices.

(r3.3) When $f$ is  incident with exactly two $4^-$-vertices, it receives $\frac{1}{2}$ from the remaining incident $5^+$-vertex, and gets $\frac{1}{6}$ from each of its adjacent $5^+$-faces.

(r4) \emph{Charge to a 4-face $f$}

(r4.1)  $f$ is incident with a 2-vertex: (1)If $f$ is not incident with any 3-vertex, then it receives $\frac{1}{4}$ from each of its incident $6$-vertices, and $\frac{1}{5}$ from each of its incident $5$-vertices. (2) If $f$ is incident with a 3-vertex, then it receives $\frac{5}{12}$ from each of its incident $6$-vertices. 

(r4.2) $f$ is not incident with any 2-vertex but is incident with a 3-vertex: (1) If $f$ is incident with only one 3-vertex,  then when $f$ is a (3,6,4,6), $f$ receives  $\frac{1}{6}$  from each of its incident $6$-vertices; when $f$ is not a (3,6,4,6)-face, $f$ receives $\frac{1}{5}$ from each of its incident $5$-vertices, and $\frac{2}{15}$  from each of its incident $6$-vertices. (2)
If $f$ is incident with two 3-vertices, then when $f$ is incident with  two  6-vertices, it receives $\frac{1}{3}$ from each of its incident $6$-vertices; when $f$ is incident with one 6-vertex, it receives $\frac{7}{15}$ from  its  $6$-neighbor, and $\frac{1}{5}$ from its $5$-neighbor.

(r5) Every $5^+$-face with positive charge after the above distribution, r1 to r4, distributes its remaining charges evenly among its incident 6-vertices.


The rest of this paper is to check that  $ch'(x)\geq 0$ for all $x\in (V(H)\cup F(H))$ which will be the desired contradiction. 

\begin{lemma} \label{chargeto6-vertex1}
Let $F$ be a 6-face that is adjacent to two 4-faces $S_1$ and $S_2$. If $F, S_1$ and  $S_2$ are incident with a common 6-vertex $v$ (see Figure \ref{6-vertex} (a)), then

$(1)$ when $d_H(v_1)=d_H(v_2)=2$, $v$ receives at least $\frac{1}{6}$ from $F$;

$(2)$ when $d_H(v_1)=d_H(v_2)=3$, $v$ receives at least $\frac{5}{18}$ from $F$;

$(3)$ when $\{d_H(v_1), d_H(v_2)\}=\{2,3\}$, $v$ receives at least $\frac{2}{9}$ from $F$;

$(4)$ when $\{d_H(v_1), d_H(v_2)\}=\{2,4^+\}$, $v$ receives at least $\frac{1}{4}$ from $F$;

$(5)$ when $\{d_H(v_1), d_H(v_2)\}=\{3,4^+\}$, $v$ receives at least $\frac{7}{24}$ from $F$;

$(6)$ when $\{d_H(v_1), d_H(v_2)\}=\{4^+,4^+\}$, $v$ receives at least $\frac{1}{4}$ from $F$.

\end{lemma}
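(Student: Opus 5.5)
The approach is to read off directly what the $6$-face $F$ gives away under rules (r1)--(r4), and then apply (r5). Label $F=vv_1abcv_2$, where $vv_1\in S_1$ and $vv_2\in S_2$. Initially $ch(F)=2$. Under (r1)--(r4), $F$ sends charge only to its incident $2$- and $3$-vertices (via (r1.1), (r1.2), (r2.1), (r2.2)) and to its adjacent $3$-faces that carry two $4^-$-vertices (via (r3.3), $\frac16$ each). By (r5), whatever remains is split evenly among the $6$-vertices of $F$. So in each case I need an upper bound on the outflow and an upper bound on the number of $6$-vertices of $F$; the quotient gives the claimed lower bound for $v$.

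\textbf{Step 1: the vertices $v_1,v_2$.} The first step is to pin down how much $v_1$ (and symmetrically $v_2$) takes from $F$. The vertex $v_1$ lies on both the $4$-face $S_1$ and on $F$.

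If $d_H(v_1)=2$, then $S_1$ and $F$ are its only faces, so $v_1$ is on no $3$-face. Hence it takes exactly $\frac12$ from $F$ by (r1.2). Also, by Lemma \ref{lemma1}(1) its neighbour $a$ is a $6$-vertex.

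If $d_H(v_1)=3$, its third face contains an edge of $S_1$. That face cannot be a $3$-face, because a $3$-face adjacent to the $4$-cycle $S_1$ is forbidden by property (2). So $v_1$ takes $\frac13$ by (r2.2). Moreover, Lemma \ref{lemma1}(1) forces $a$ to be a $5^+$-vertex.

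If $d_H(v_1)\ge 4$, $v_1$ takes nothing.

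Edges of $F$ at $v_1$ and $v_2$ lie on $S_1$ or $S_2$, or are incident with a $2$/$3$-vertex whose faces are already determined. Using the same forbidden-configuration argument, I would check that the faces across $vv_1$, $v_1a$, $cv_2$, $v_2v$ never demand the $\frac16$ of (r3.3).

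\textbf{Step 2: the middle vertices $a,b,c$ and adjacent triangles.} Next I bound what $a,b,c$ and the $3$-faces on edges $ab$, $bc$ take. The tools are as follows.

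\begin{itemize}
\item Lemma \ref{lemma1}(1): a $2$-vertex has only $6$-neighbours, and a $3$-vertex has only $5^+$-neighbours. Consequently a $3$-face on an edge $xy$ with $d_H(x)=3$ has only one $4^-$-vertex, so it takes nothing from $F$ under (r3.3).
\item Lemma \ref{lemma2}: in a $(2,6,6)$-face, each $6$-vertex has no other $2$-neighbour. For example, in case (1), if $b$ were a $2$-vertex on a $3$-face $abc$, then $a$ would have the second $2$-neighbour $v_1$, which is impossible. So $b$ takes only $\frac12$.
\item The absence of chord-$4$-cycles: a $3$-vertex on $F$ cannot lie on two $3$-faces, so a $3$-vertex takes at most $\frac12$.
\end{itemize}

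In each of the six cases I would enumerate the possible degrees of $b$ (and of $a,c$ when $v_1$ or $v_2$ is a $4^+$-vertex). For each, I compute the outflow $\omega$ and the number $k$ of $6$-vertices, and verify that $(2-\omega)/k$ meets the stated bound.

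For case (1), $a$ and $c$ are $6$-vertices and $\omega\le \frac12+\frac12+\frac12$. Since $b$ is then not a $6$-vertex, $k\le 3$, and this gives $\frac12/3=\frac16$. When instead $b$ is a $6$-vertex, $k=4$ but $\omega\le 1$, which gives more than $\frac16$. The other cases should follow the same pattern with $\frac13$ in place of $\frac12$ for $3$-vertices; for instance, case (2) gives at least $(2-\frac23-\frac12)/3=\frac5{18}$.

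\textbf{Main obstacle.} I expect the main difficulty to be cases (4)--(6), where $v_1$ or $v_2$ is a $4^+$-vertex. There $a$ or $c$ may be a $2$-vertex on a $3$-face (taking $1$), or there may be several $(4^-,4^-,\cdot)$ triangles on $ab$, $bc$, $cv_2$. Bounding $\omega$ requires excluding combinations using property (2): no $3$-face adjacent to $S_1$ or $S_2$, and no $5^-$-cycle through $v$ adjacent to $S_i$. It also requires trading off between a small $\omega$ and a small $k$.
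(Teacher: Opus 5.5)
Your framework is the paper's: bound what $F$ loses under (r1)--(r4), then divide the remainder by the number of $6$-vertices. Your case (1) is fine. There is, however, a genuine gap in how you bound what the middle vertices $a,b,c$ take.

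\textbf{Where the argument fails.} You show that a $2$-vertex among $a,b,c$ avoids $3$-faces only in case (1), via Lemma \ref{lemma2}. That argument needs the $2$-vertex's neighbour on $F$ to have a second $2$-neighbour, so it does not transfer to the other cases.
\begin{itemize}
\item In case (2), your value $(2-\frac23-\frac12)/3$ silently assumes $b$ takes at most $\frac12$. If $b$ were a $2$-vertex on the $3$-face $abc$, it would take $1$ by (r1.1). That leaves $\frac13$ for the three $6$-vertices $v,a,c$, i.e.\ only $\frac19<\frac5{18}$.
\item In cases (4)--(6) you explicitly allow that ``$a$ or $c$ may be a $2$-vertex on a $3$-face (taking $1$)'' and hope to trade this off against the number of $6$-vertices. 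No trade-off works. In case (4), take $c$ a $2$-vertex on the triangle $bcv_2$: then $\omega=\frac32$ and $v,a,b,v_2$ are four $6$-vertices, so $v$ receives only $\frac18<\frac14$.
\end{itemize}
Lemma \ref{lemma2} excludes neither configuration.

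\textbf{The missing idea.} The paper states at the outset that no $2$-vertex $w$ of $F$ lies on a $3$-face at all. Such a face would be $xwy$, with $x,y$ the neighbours of $w$ on $F$. Replacing the path $xwy$ by the edge $xy$ turns the boundary of $F$ into a $5$-cycle through the $6$-vertex $v$. This $5$-cycle shares the edge $vv_1$ with the $4$-cycle $S_1$, which property (2) forbids. With this, every $3^-$-vertex of $F$ takes at most $\frac12$ in every case, and Lemma \ref{lemma2} becomes unnecessary.

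\textbf{Two further problems.}
\begin{itemize}
\item Your Step 1 claim that the faces across $v_1a$ and $cv_2$ never take $\frac16$ is false when $d_H(v_1)=4$ or $d_H(v_2)=4$. A $3$-face on such an edge with two $4^-$-vertices is not adjacent to $S_1$ or $S_2$, so it is allowed and does take $\frac16$ by (r3.3). The paper counts exactly these, e.g.\ ``at most one $(4^-,4^-,5^+)$-face when $d_H(v_2)=4$'' in case (4).
\item Cases (3)--(6) are never actually carried out. Once the observation above is in place, they reduce to the paper's short enumerations. By Lemma \ref{lemma1}(1), no two adjacent vertices of $F$ are $3^-$-vertices. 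So you split on which of $a,b,c$ is a $3^-$-vertex, and count the $3$-faces with two $4^-$-vertices on the edges of $F$ outside $S_1\cup S_2$. You then balance that count against the number of $6$-vertices.
\end{itemize}
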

\begin{proof}
Obviously, if $d_H(v_1)\leq 3$ and $d_H(v_2)\leq 3$, then $v_1,v_2$ are not incident with any 3-face. Note that if $F$ is incident with a 2-vertex $w$, then   $w$ is not incident with a 3-face; otherwise the subgraph induced by vertices in $F$ contains a 5-cycle, which is adjacent to a 4-cycle $S_1$ or $S_2$, and a contradiction. Therefore, the $3^-$-vertex incident with $F$ can receive at most $\frac{1}{2}$ from $F$ by r1 and r2.

\begin{figure}[H]
\centering
  \includegraphics[width=75pt]{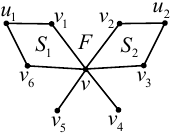}\hspace{0.5cm}
  \includegraphics[width=65pt]{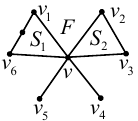}\hspace{0.5cm}
    \includegraphics[width=80pt]{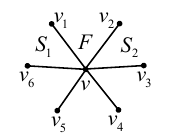}\\
  (a)\hspace{2.5cm} (b)\hspace{2.5cm} (c)
  \caption{Illustrations for Lemmas \ref{chargeto6-vertex1}, \ref{chargeto6-vertex2},\ref{chargeto6-vertex}}\label{6-vertex}
\end{figure}

For (1), it has that $d_H(u_1)=d_H(u_2)=6$, and both $u_1$ and $u_2$ are incident with $F$. Let $F=u_1v_1vv_2u_2w$. By Lemma \ref{lemma1} (2), we have  $d_H(w)\geq 3$. If $d_H(w)=3$, then $F$ is not adjacent to any ($4^-,4^-,5^+$)-face. Therefore, $F$ has at least $6-4-\frac{1}{2}-\frac{1}{2}-\frac{1}{2}=\frac{1}{2}$ after r1 to r4, and $F$ gives $v$ at least $\frac{1}{3}\times \frac{1}{2}=\frac{1}{6}$ by r5. If $d_H(w)=4$, then $F$ is adjacent to at most two ($4^-,4^-,5^+$)-faces. Therefore, $F$ has at least $6-4-\frac{1}{2}-\frac{1}{2}-2\times\frac{1}{6}=\frac{2}{3}$ after r1 to r4, and $F$ gives $v$ at least $\frac{2}{3}\times \frac{1}{3}=\frac{2}{9}$ by r5. If $d_H(w)\geq 5$, then $F$ is not adjacent to any ($4^-,4^-,5^+$)-face. Therefore, $F$ has at least $6-4-\frac{1}{2}-\frac{1}{2}=1$ after r1 to r4, and $F$ gives $v$ at least $1\times \frac{1}{4}=\frac{1}{4}$ by r5.

For (2), let $F=v_1vv_2w_1w_2w_3$. By Lemma \ref{lemma1} (1), we have  $d_H(w_1)\geq 5$, $d_H(w_3)\geq 5$. If $d_H(w_2)\leq 3$, then $F$ is not adjacent to any ($4^-,4^-,5^+$)-face. Therefore, $F$ has at least $6-4-\frac{1}{3}-\frac{1}{3}-\frac{1}{2}=\frac{5}{6}$ after r1 to r4, and $F$ gives $v$ at least $\frac{5}{6}\times \frac{1}{3}=\frac{5}{18}$ by r5. If $d_H(w_2)=4$, then $F$ is adjacent to at most two ($4^-,4^-,5^+$)-faces. Therefore, $F$ has at least $6-4-\frac{1}{3}-\frac{1}{3}-2\times\frac{1}{6}=1$ after r1 to r4, and $F$ gives $v$ at least $1\times \frac{1}{3}=\frac{1}{3}$ by r5. If $d_H(w_2)\geq 5$, then $F$ is not adjacent to any ($4^-,4^-,5^+$)-face. Therefore, $F$ has at least $6-4-\frac{1}{3}-\frac{1}{3}=\frac{4}{3}$ after r1 to r4, and $F$ gives $v$ at least $\frac{4}{3}\times \frac{1}{4}=\frac{1}{3}$ by r5.

For (3), without loss of generality, we assume $d_H(v_1)=2, d_H(v_2)=3$. Then, $u_1$ is incident with $F$. Let $F=u_1v_1vv_2w_1w_2$. By Lemma \ref{lemma1} (1), it has that  $d_H(w_1)\geq 5$, $d_H(u_1)=6$. If $d_H(w_2)\leq 3$, then $F$ is not adjacent to any ($4^-,4^-,5^+$)-face. Therefore, $F$ has at least $6-4-\frac{1}{3}-\frac{1}{2}-\frac{1}{2}=\frac{2}{3}$ after r1 to r4, and $F$ gives $v$ at least $\frac{2}{3}\times \frac{1}{3}=\frac{2}{9}$ by r5. If $d_H(w_2)=4$, then $F$ is adjacent to at most two ($4^-,4^-,5^+$)-faces. Therefore, $F$ has at least $6-4-\frac{1}{3}-\frac{1}{2}-2\times\frac{1}{6}=\frac{5}{6}$ after r1 to r4, and $F$ gives $v$ at least $\frac{5}{6}\times \frac{1}{3}=\frac{5}{18}$ by r5. If $d_H(w_2)\geq 5$, then $F$ is not adjacent to any ($4^-,4^-,5^+$)-face. Therefore, $F$ has at least $6-4-\frac{1}{3}-\frac{1}{2}=\frac{7}{6}$ after r1 to r4, and $F$ gives $v$ at least $\frac{7}{6}\times \frac{1}{4}=\frac{7}{24}$ by r5.

For (4), without loss of generality, we assume $d_H(v_1)=2, d_H(v_2)\geq 4$. Then, $u_1$ is incident with $F$. Let $F=u_1v_1vv_2w_1w_2$. By Lemma \ref{lemma1} (1), it has that  $d_H(u_1)=6$. If $\{w_1,w_2\}$ contains a $3^-$-vertex, then $F$ is  adjacent to at most one ($4^-,4^-,5^+$)-face when $d_H(v_2)=4$, and $F$ is not adjacent to any ($4^-,4^-,5^+$)-face when $d_H(v_2)\geq 5$.  Therefore, $F$ gives $v$ at least $\min$\{$(6-4-\frac{1}{6}-\frac{1}{2}-\frac{1}{2})\times \frac{1}{3}$, $(6-4-\frac{1}{2}-\frac{1}{2})\times \frac{1}{4}$\}=$\frac{1}{4}$.  If $d_H(w_1)\geq 4$ and $d_H(w_2)\geq 4$, then $F$ is  adjacent to at most three ($4^-,4^-,5^+$)-faces. Therefore, $F$ gives $v$ at least $\min$\{ $(6-4-\frac{1}{2})\times \frac{1}{5}$, $(6-4-\frac{1}{2}-\frac{1}{6})\times \frac{1}{4}$, $(6-4-\frac{1}{2}-2\times \frac{1}{6})\times \frac{1}{4}$, $(6-4-\frac{1}{2}-3\times \frac{1}{6})\times \frac{1}{3}$\}=$\frac{7}{24}$.

For (5), without loss of generality, we assume $d_H(v_1)=3, d_H(v_2)\geq 4$. Let $F=v_1vv_2w_1w_2w_3$. By Lemma \ref{lemma1} (1), it has that  $d_H(w_3)\geq 5$.
 If $\{w_1,w_2\}$ contains a $3^-$-vertex, then $F$ is  adjacent to at most one ($4^-,4^-,5^+$)-face when $d_H(v_2)=4$, and $F$ is not adjacent to any($4^-,4^-,5^+$)-face when $d_H(v_2)\geq 5$.  Therefore, $F$ gives $v$ at least $\min$\{$(6-4-\frac{1}{6}-\frac{1}{3}-\frac{1}{2})\times \frac{1}{3}$, $(6-4-\frac{1}{3}-\frac{1}{2})\times \frac{1}{4}$\}=$\frac{7}{24}$.  If $d_H(w_1)\geq 4$ and $d_H(w_2)\geq 4$, then $F$ is  adjacent to at most three ($4^-,4^-,5^+$)-faces. Therefore, $F$ gives $v$ at least $\min$\{$(6-4-\frac{1}{3})\times \frac{1}{5}$, $(6-4-\frac{1}{3}-\frac{1}{6})\times \frac{1}{4}$, $(6-4-\frac{1}{3}-2\times \frac{1}{6})\times \frac{1}{4}$, $(6-4-\frac{1}{3}-3\times \frac{1}{6})\times \frac{1}{3}$\}=$\frac{1}{3}$.

For (6),  let $F=v_1vv_2w_1w_2w_3$. By Lemma \ref{lemma1} (1), $\{w_1,w_2,w_3\}$ contains at most two $3^-$-vertices. If $\{w_1,w_2,w_3\}$ contains exactly two $3^-$-vertices, then $F$ is not adjacent to any($4^-,4^-,5^+$)-face, and  $d_H(w_1)\leq 3$, $d_H(w_3)\leq 3$ and $d_H(w_2)\geq 5$.  Therefore, $F$ has at least $6-4-\frac{1}{2}-\frac{1}{2}=1$ after r1 to r4, and $F$ gives $v$ at least $1\times \frac{1}{4}=\frac{1}{4}$ by r5. If $\{w_1,w_2,w_3\}$ contains  one $3^-$-vertex, then $F$ is adjacent to at most two ($4^-,4^-,5^+$)-faces. Therefore, $F$ gives $v$ at least $\min$\{$(6-4-\frac{1}{2})\times \frac{1}{5}$, $(6-4-\frac{1}{2}-2\times \frac{1}{6})\times \frac{1}{4}$\}=$\frac{7}{24}$. If $\{w_1,w_2,w_3\}$ contains  no $3^-$-vertex, then $F$ is adjacent to at most four ($4^-,4^-,5^+$)-faces. Therefore, $F$ gives $v$ at least $\min$\{$(6-4)\times \frac{1}{6}$, $(6-4-2\times \frac{1}{6})\times \frac{1}{5}$, $(6-4-4\times \frac{1}{6})\times \frac{1}{4}$\}=$\frac{1}{3}$.  \qed
\end{proof}

\begin{lemma} \label{chargeto6-vertex2}
Let $F$ be a 6-face that is adjacent to a 4-face $S_1$ and a 3-face $S_2$. If $F, S_1, S_2$ are incident with a common 6-vertex $v$ (see Figure \ref{6-vertex} (b)), then

$(1)$ when $d_H(v_1)=2, d_H(v_2)=3$, $v$ receives at least $\frac{1}{6}$ from $F$;

$(2)$ when $d_H(v_1)=2, d_H(v_2)\geq4$, $v$ receives at least $\frac{2}{9}$ from $F$;

$(3)$ when $d_H(v_1)= d_H(v_2)=3$, $v$ receives at least $\frac{2}{9}$ from $F$;

$(4)$ when $d_H(v_1)=3, d_H(v_2)\geq 4$, $v$ receives at least $\frac{5}{18}$ from $F$;

$(5)$ when $d_H(v_1)\geq 4, d_H(v_2)\geq 4$, $v$ receives at least $\frac{1}{4}$ from $F$.
\end{lemma}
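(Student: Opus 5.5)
The argument will mirror the proof of Lemma \ref{chargeto6-vertex1}. In Figure \ref{6-vertex} (b), $v_1$ is the common vertex of $F$ and the 4-face $S_1$ adjacent to $v$ along $F$. $v_2$ is the common vertex of $F$ and the 3-face $S_2$ adjacent to $v$ along $F$. So $F=v_1vv_2w_1w_2w_3$, and $v_2$ lies on $S_2$.

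\textbf{Structural observations.} Before the case analysis I will collect three facts.

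First, no $3^-$-vertex of $F$ other than possibly $v_2$ can lie on a 3-face that is also adjacent to $F$ in a way that gives it $\frac12$ from $F$. Otherwise the vertices of $F$ together with $S_1$ would contain a $5^-$-cycle through the 6-vertex $v$ adjacent to the 4-cycle $S_1$, contradicting property (2) of $H$.

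Second, $F$ is not adjacent to a 3-face along the edges of $S_1$, since a 4-cycle cannot be adjacent to a 3-cycle. Hence any $3^-$-vertex on $F$ takes at most $\frac12$ by r1 and r2, and only $\frac13$ when it lies on no 3-face.

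Third, the triangle $S_2=vv_2x$ is itself a 3-face adjacent to $F$ through $vv_2$. If $d_H(v_2)\le 4$, then $S_2$ may be a $(4^-,4^-,5^+)$-face only if $x$ is also $4^-$. In that case $F$ pays $S_2$ at most $\frac16$ by r3.3. Moreover, $v_2$, being on a 3-face, takes $\frac12$ from $F$ by r1.1 or r2.1 when $d_H(v_2)\le3$.

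\textbf{Case analysis.} I will split on $(d_H(v_1),d_H(v_2))$ exactly as in the five items.

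In each case, Lemma \ref{lemma1}(1) fixes the degrees of the neighbours on $F$ of the $3^-$-vertices. A 2-vertex has 6-neighbours only, and a 3-vertex has $5^+$-neighbours. For example, in (1) $w_3$ and $w_1$ are $5^+$ (with $w_3=6$), and if $v_1$ is a 2-vertex its other neighbour on $F$ is a 6-vertex. I will then sub-split on the degrees of the remaining $w_i$: $3^-$, exactly $4$, or $5^+$. In each sub-case I will do the following.

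\begin{itemize}
\item Bound the total given away: $\frac12$ per $3^-$-vertex on a 3-face (or on $S_2$), $\frac13$ for a 3-vertex on no 3-face, $\frac12$ for 2-vertices, and $\frac16$ per adjacent $(4^-,4^-,5^+)$-face. The number of such faces is bounded by counting consecutive $4^-$ pairs on $F$.
\item Divide the remainder $6-4-(\cdots)$ by the number of 6-vertices on $F$, which is at most $6$ minus the number of $5^-$-vertices. This is the share $v$ gets by r5.
\item Take the minimum over sub-cases, as was done in Lemma \ref{chargeto6-vertex1}.
\end{itemize}

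For instance, in (1) the worst sub-case should be the following: $v_1$ gives $\frac12$, $v_2$ gives $\frac12$, and one $w_i$ is a $3^-$-vertex giving $\frac12$. That leaves $\frac12$ shared among at most three 6-vertices, which gives $\frac16$.

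\textbf{Main obstacle.} The delicate step is justifying which $3^-$-vertices of $F$ may sit on 3-faces, and therefore cost $\frac12$ rather than $\frac13$. The same step also determines how many $(4^-,4^-,5^+)$-faces can be adjacent to $F$. This needs the forbidden-configuration property (2): 4-cycles are not adjacent to 3-cycles, and a 4-cycle at a 6-vertex is not adjacent to a $5^-$-cycle at that vertex. It also needs Lemma \ref{lemma2}, which excludes a second 2-neighbour of a 6-vertex on a $(2,6,6)$-face.

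I will handle this first by listing the local pictures at $v$ (the faces $S_1$, $F$, $S_2$) and checking each potential short cycle. After that, the arithmetic in each item is routine.
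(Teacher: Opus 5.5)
Your plan is the paper's: bound what $F$ pays out under r1--r4, split the remainder among the 6-vertices of $F$ by r5, and take the minimum over sub-cases. Your computation for (1) is exactly the paper's. The step that fails is your rule of counting the $(4^-,4^-,5^+)$-faces that take $\frac16$ from $F$ by ``consecutive $4^-$ pairs on $F$''. By Lemma~\ref{lemma1}(1), a 3-face through a $3^-$-vertex has its other two vertices of degree at least 5, so these faces are really $(4,4,5^+)$-faces. Such a face sharing an edge $ab$ with $F$ needs only one of $a,b$ to be a 4-vertex, because the second 4-vertex can be the apex, off $F$. You saw this for $S_2$ (apex $x$), but your rule ignores it on the other edges of $F$, and that is where the tight cases of (2) and (4) lie. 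Take $d_H(v_2)=4$ and $S_2=vv_2x$ with $d_H(x)=4$. Let the face across $v_2w_1$ be a triangle $v_2w_1z$ with $d_H(w_1)=6$ and $d_H(z)=4$, and let $w_2$ be a $3^-$-vertex. Then $F$ pays $\frac16$ twice. It keeps $2-\frac12-\frac12-\frac13=\frac23$ in (2), resp.\ $2-\frac13-\frac12-\frac13=\frac56$ in (4), shared by $v,w_1,w_3$. This gives exactly $\frac29$ and $\frac5{18}$, the paper's bounds; the paper's two $\frac16$'s ``when $v_2$ is a 4-vertex'' are these faces. Your rule charges only $S_2$ here and would give $\frac5{18}$ and $\frac13$. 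Your final minimum would still clear the stated bounds, but only because the extremal configuration is mis-evaluated, so (2) and (4) are not actually proved. The correct rule is that every edge of $F$ other than $vv_1$ with a 4-vertex end and no $3^-$-end may carry one $\frac16$.

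Your first structural observation is also false for 3-vertices. A 3-vertex $w_2$ may lie on a triangle $w_1w_2z$ with $z\notin V(F)$. It then takes $\frac12$ from $F$ by r2.1, and no short cycle through $v$ arises. The shortcut argument works only for 2-vertices. Their 3-face is spanned by their two neighbours on $F$, so its third edge shortens $F$ to a 5-cycle through $v$ containing $vv_1$, adjacent to $S_1$, which property (2) forbids. That restricted fact is what you need, since it rules out a 2-vertex on $F$ costing a full $1$ under r1.1. Your bookkeeping bullet already charges $\frac12$ to a 3-vertex on a 3-face, so rely on the bullet, not the observation. Similarly, the $\frac13$ charge for $v_1$ in (3) and (4) needs the fact that a 3-vertex $v_1$ lies on no 3-face, since its third face shares an edge with the 4-cycle $S_1$. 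Your second observation does not state this.
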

\begin{proof}
For (1), $F$ is incident with at most two 2-vertices. In particular, when  $F$ is incident with exactly two 2-vertices, no such 2-vertex is incident with a 3-face by Lemma \ref{lemma2}. Therefore, $F$ give $v$ at least $(6-4-3\times\frac{1}{2})\times \frac{1}{3}=\frac{1}{6}$.

For (2), $F$ is also incident with at most two 2-vertices, and there does not exist  such 2-vertex to be incident with a 3-face by  the condition (2) of Theorem \ref{mainresult}.  Therefore, $F$ give $v$ at least $(6-4-\frac{1}{2}-\frac{1}{2}-2\times \frac{1}{6})\times \frac{1}{3}=\frac{2}{9}$ (when $v_2$ is a 4-vertex).

With an analogous proof, we have

For (3),  $F$ give $v$ at least $(6-4-\frac{1}{2}-\frac{1}{2}-\frac{1}{3})\times \frac{1}{3}=\frac{2}{9}$ (when $F$ is incident with a 2-vertex, or two 3-vertices incident with  3-faces).

For (4),  $F$ give $v$ at least $(6-4-\frac{1}{2}-\frac{1}{3}-2\times \frac{1}{6})\times \frac{1}{3}=\frac{5}{18}$ (when $v_2$ is a 4-vertex and $F$ is adjacent to two ($4^-,4^-,5^+$)-faces).

For (5),  $F$ give $v$ at least $(6-4-\frac{1}{2}-\frac{1}{2})\times \frac{1}{4}=\frac{1}{4}$ (when $F$ is incident with two $3^-$-vertices).  \qed
\end{proof}

\begin{lemma} \label{chargeto6-vertex}
Let $F$ be a $7^+$-face that is adjacent to two
$4^-$-faces $S_1$ and $S_2$, and $F$, $S_1$ and $S_2$ are incident with a common 6-vertex $v$, see Figure \ref{6-vertex} (c).  If $v_1$ (resp. $v_2$) is not incident with a 3-face when $v_1$ (resp. $v_2$) is a 2-vertex, then





$(1)$ when $d_H(v_1)=d_H(v_2)=2$, $v$ receives at least $\frac{1}{4}$ from $F$;

$(2)$ when $\{d_H(v_1), d_H(v_2)\}=\{2,3\}$, $v$ receives at least $\frac{1}{4}$ from $F$;

$(3)$ when $d_H(v_1)=d_H(v_2)=3$, $v$ receives at least $\frac{1}{4}$ from $F$;

$(4)$ when $\{d_H(v_1), d_H(v_2)\}=\{2,4^+\}$, $v$ receives at least $\frac{1}{4}$ from $F$;

$(5)$ when $\{d_H(v_1), d_H(v_2)\}=\{3,4^+\}$, $v$ receives at least $\frac{1}{8}$ from $F$;

$(6)$ when $\{d_H(v_1), d_H(v_2)\}=\{4^+,4^+\}$, $v$ receives at least $\frac{1}{6}$ from $F$.
\end{lemma}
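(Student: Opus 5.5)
Following the proofs of Lemmas \ref{chargeto6-vertex1} and \ref{chargeto6-vertex2}, I would bound the charge of $F$ left after r1--r4 from below and then divide by an upper bound on the number of 6-vertices incident with $F$, as r5 prescribes.

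Let $d=d_H(F)\geq 7$, so $ch(F)=d-4\geq 3$. A $5^+$-face gives charge only to its incident $3^-$-vertices (r1, r2) and to adjacent ($4^-,4^-,5^+$)-faces (r3.3). Each such transfer is at most $1$, $\frac12$, or $\frac16$ respectively. The value $1$ arises only when $F$ is the $5^+$-face of a 2-vertex lying on a 3-face (r1.1). By Lemma \ref{lemma1}(1), no two $3^-$-vertices are adjacent on the boundary of $F$, so $F$ has at most $\lfloor d/2\rfloor$ incident $3^-$-vertices. Each ($4^-,4^-,5^+$)-face adjacent to $F$ uses a boundary edge of $F$ whose ends are both $4^-$-vertices.

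The key step is a per-edge (or per-vertex) accounting. Assign the cost of each $3^-$-vertex to its two boundary edges, and the cost of each r3.3 face to its shared edge. The case to control is a 2-vertex $w$ on a 3-face $wxy$: then $x,y$ are 6-vertices (Lemma \ref{lemma1}(1)), and the edge $xy$ is not on $F$. The hypothesis on $v_1,v_2$ rules out this case for the two vertices adjacent to $v$.

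Since $v$ is a 6-vertex and $F$ contains $v_1vv_2$, the number of 6-vertices on $F$ is at most $d$ minus the number of $5^-$-vertices. This yields a trade-off. Many expensive $3^-$-vertices means few 6-vertices, so the divisor shrinks. Few $3^-$-vertices means the remaining charge is large. For each case (1)--(6), I would write $F$ as $v_1vv_2$ followed by a path of $d-3$ further vertices. I would then use Lemma \ref{lemma1}(1) to force the neighbours of $3^-$-vertices to have degree $5^+$, and 2-vertices to have 6-neighbours. Minimizing the ratio (remaining charge)/(number of 6-vertices) over admissible degree patterns should give the stated bounds. For example, in case (1) with $d=7$, the neighbours $u_1,u_2$ of $v_1,v_2$ on $F$ are 6-vertices. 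The two remaining vertices can contain at most one $3^-$-vertex. Each of $v_1,v_2$ costs $\frac12$, and the worst remaining cost is at most $1$. This leaves at least $3-\frac12-\frac12-1=1$ for at most $4$ six-vertices, giving $\frac14$.

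The main obstacle is the general $d$: the charge $d-4$ grows, but so can the number of 2-vertices on 3-faces costing $1$ each. I would handle this by noting that a 2-vertex costing $1$ has two 6-neighbours on $F$ that could otherwise be counted. I would also check that the function $(d-4-\text{cost})/\#\{6\text{-vertices}\}$ is minimized at $d=7$ or $d=8$. The worst pattern alternates 2-vertices on 3-faces with 6-vertices, roughly $(d-4-d/2)/(d/2)$, which is positive for $d\geq 9$ and tends to $1$. For $d=7,8$ I would enumerate directly. Case (5), with bound $\frac18$, is presumably the tightest and would need the explicit $d=7$ and $d=8$ enumerations.
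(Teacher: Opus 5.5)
Your framework is the paper's: bound below what $F$ keeps after r1--r4, then divide by an upper bound on the number of 6-vertices of $F$. However, your toolkit is too small, and the large-face step does not work as written.

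\textbf{First gap: Lemma \ref{lemma2} is never used, and claim (4) fails at $d=7$ without it.} You list only Lemma \ref{lemma1}(1), the non-adjacency of $3^-$-vertices on $F$, and the hypothesis on $v_1,v_2$. These do not exclude the following configuration.

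Write $F=v\,v_2\,a\,b\,c\,u_1\,v_1$ with $d_H(v_1)=2$, so $d_H(u_1)=6$. Take $d_H(v_2)=d_H(b)=6$, and let $a,c$ be 2-vertices lying on the 3-faces $v_2ab$ and $bcu_1$. Then $a$ and $c$ each take $1$ from $F$ by r1.1, and $v_1$ takes $\frac12$ by r1.2. There is no r3.3 transfer, since no edge of $F$ has two $4$-vertex ends. So $F$ keeps $3-\frac12-1-1=\frac12$ for the four 6-vertices $v,v_2,b,u_1$, and $v$ receives only $\frac18<\frac14$.

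The paper rules this out with Lemma \ref{lemma2}: $b$ lies on the $(2,6,6)$-face $v_2ab$ yet has a second 2-neighbour $c$. Hence at most one 2-vertex of $F$ other than $v_1$ lies on a 3-face, which gives $(3-\frac12-1-\frac12)/4=\frac14$. Alternatively, condition (2) of Theorem \ref{mainresult} excludes it: $vv_2bu_1v_1$ would be a 5-cycle through the 6-vertex $v$ sharing the edges $vv_1,v_1u_1$ with the 4-face $S_1$. The same lemma also shows that your proposed worst pattern, 2-vertices on 3-faces alternating with 6-vertices, can never occur, since each intermediate 6-vertex would have two 2-neighbours.

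\textbf{Second gap: the treatment of faces with $d\ge 9$ is only asserted.} Your estimate is $(d-4-d/2)/(d/2)=1-\frac8d$. At $d=9$ it equals $\frac19$, below every required constant; at $d=10$ it equals $\frac15$, below $\frac14$. So faces just beyond your hand-checked range $\{7,8\}$ are not covered, and ``positive'' is not the conclusion you need.

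What makes large faces harmless in the paper is exact bookkeeping. Let $\ell_2,\ell_3$ be the numbers of 2- and 3-vertices on $F$, and $\ell$ the number of adjacent $(4^-,4^-,5^+)$-faces. Then $F$ keeps at least $d-4-\ell_2-\frac{\ell_3}{2}-\frac{\ell}{6}+s$, where $s$ adds $\frac12$ for each of $v_1,v_2$ that is a 2-vertex (such a vertex takes only $\frac12$ from $F$). Also, $F$ has at most $d-\ell_2-\ell_3-\lceil\ell/2\rceil$ six-vertices. A 2-vertex charged $1$ lowers the numerator and the denominator by the same amount. What matters is that it is not itself a 6-vertex, not that its neighbours are. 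So the share of $v$ is at least
$1+\frac{\ell_3/2+\lceil\ell/2\rceil-\ell/6-(4-s)}{d-\ell_2-\ell_3-\lceil\ell/2\rceil}$.
This expression is governed by $d-\ell_2\ge\lceil d/2\rceil$. That is what makes large faces harmless, and it leaves only a few small faces to check directly (the paper singles out $d=7$ in cases (4) and (6), and $d=8$ in case (6)).
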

\begin{proof}
Let $F$ be a $k$ ($\geq 7$)-face. Denote by $\ell_2$ and $\ell_3$  the number of $2$-vertices and $3$-vertices incident with $F$, respectively.  Let $\ell$ be the number of ($4^-,4^-,5^+$)-faces incident with $F$. Then,  $\ell\leq k-2(\ell_2+\ell_3)$, $\ell_2\leq \lceil\frac{k-2}{2}\rceil$ and $\ell_2+\ell_3\leq \lfloor\frac{k}{2}\rfloor$, and $F$ is incident with at least $\lceil\frac{\ell}{2}\rceil$ 4-vertices.
 Note that when $k=7$, $F$ is incident with at most two 2-vertices that are incident with a 3-face. Otherwise, there exist a 4-cycle that is adjacent to a 3-cycle. What's more, if $d_H(v_i)\leq 3$ for $i=1,2$, then $v_i$ is not incident with any 3-face except $S_1$ and $S_2$.

(1) $\{d_H(v_1), d_H(v_2)\}=\{2\}$. Then neither $v_2$ nor $v_3$ is incident with a 3-face by Lemma \ref{lemma2}. So, After r1 to r4, $F$ has at least $k-4-\ell\times \frac{1}{6}-(\ell_2-2)-2\times \frac{1}{2}-\ell_3\times \frac{1}{2}$. Therefore, $F$ gives $v$ at least
$\frac{k-4-\ell\times \frac{1}{6}-(\ell_2-2)-2\times \frac{1}{2}-\ell_3\times \frac{1}{2}}{k-\ell_2-\ell_3-\lceil\frac{\ell}{2}\rceil}$ $=$
$1+\frac{\frac{\ell_3}{2}+ \lceil\frac{\ell}{2}\rceil-\frac{\ell}{6}-3}{k-\ell_2-\ell_3-\lceil\frac{\ell}{2}\rceil}$ $\geq$ $1-\frac{3}{k-\ell_2}\geq \frac{1}{4}$.

(2) $\{d_H(v_1), d_H(v_2)\}=\{2,3\}$. Then, $\ell_3\geq 1$ and $\ell_2\leq \lfloor\frac{k-2}{2}\rfloor$, and $F$ has at least $k-4-\ell\times \frac{1}{6}-(\ell_2-1)- \frac{1}{2}-\ell_3\times \frac{1}{2}$  after r1 to r4. Therefore, $F$ gives $v$ at least
$\frac{k-4-\ell\times \frac{1}{6}-(\ell_2-1)-\frac{1}{2}-\ell_3\times \frac{1}{2}}{k-\ell_2-\ell_3-\lceil\frac{\ell}{2}\rceil}$  $=$
$1+\frac{\frac{\ell_3}{2}+ \lceil\frac{\ell}{2}\rceil-\frac{\ell}{6}-\frac{7}{2}}{k-\ell_2-\ell_3-\lceil\frac{\ell}{2}\rceil}$   $\geq$ $1+\frac{\frac{1}{2}-\frac{7}{2}}{k-\ell_2-1}\geq \frac{1}{4}$.

(3) $\{d_H(v_1), d_H(v_2)\}=\{3\}$. Then,  $\ell_3\geq 2$ and $\ell_2\leq \lfloor\frac{k-4}{2}\rfloor$, and $F$ has at least $k-4-\ell\times \frac{1}{6}-\ell_2-\ell_3\times \frac{1}{2}$  after r1 to r4. Therefore, $F$ gives $v$ at least
$\frac{k-4-\ell\times \frac{1}{6}-\ell_2-\ell_3\times \frac{1}{2}}{k-\ell_2-\ell_3-\lceil\frac{\ell}{2}\rceil}$ $=$
$1+\frac{\frac{\ell_3}{2}+ \lceil\frac{\ell}{2}\rceil-\frac{\ell}{6}-4}{k-\ell_2-\ell_3-\lceil\frac{\ell}{2}\rceil}$  $\geq$ $1+\frac{\frac{2}{2}-4}{k-\ell_2-2}\geq \frac{1}{4}$.

(4) $\{d_H(v_1), d_H(v_2)\}=\{2,4^+\}$. Without loss of generality, we assume $d_H(v_1)=2$ and $d_H(v_2)=4^+$. Then, $v_1$ is not incident with a 3-face. First, if $F$ is a 7-face, then $F$ can incident with at most three 2-vertices. In this case,  by Lemma \ref{lemma2} $F$ is incident with at most one 2-vertex that is incident with a 3-face. Therefore, $F$ gives $v$ at least $(7-4-\frac{1}{2}-1-\frac{1}{2})\times \frac{1}{4}=\frac{1}{4}$. Second, if $F$ is a $8^+$-face, then $\ell_2+\ell_3\leq \lceil\frac{k-2}{2}\rceil$, and $F$ has at least $k-4-\ell\times \frac{1}{6}-(\ell_2-1) -\frac{1}{2}-\ell_3\times \frac{1}{2}$  after r1 to r4. Therefore, $F$ gives $v$ at least
$\frac{k-4-\ell\times \frac{1}{6}-(\ell_2-1) -\frac{1}{2}-\ell_3\times \frac{1}{2}}{k-\ell_2-\ell_3-\lceil\frac{\ell}{2}\rceil}$ $=$
$1+\frac{\frac{\ell_3}{2}+ \lceil\frac{\ell}{2}\rceil-\frac{\ell}{6}-\frac{7}{2}}{k-\ell_2-\ell_3-\lceil\frac{\ell}{2}\rceil}$ $\geq$ $\min \{1-\frac{\frac{7}{2}+2\times \frac{1}{6}-1}{k-\ell_2-1}, 1-\frac{\frac{7}{2}}{k-\ell_2}\}\geq \min \{\frac{7}{24}, \frac{3}{10}\}$=$\frac{7}{24}$.

(5) $\{d_H(v_1), d_H(v_2)\}=\{3,4^+\}$. Then,  $\ell_3\geq 1$ and $\ell_2\leq \lceil\frac{k-4}{2}\rceil$, and $F$ has at least $k-4-\ell\times \frac{1}{6}-\ell_2-\ell_3\times \frac{1}{2}$  after r1 to r4. Therefore, $F$ gives $v$ at least
$\frac{k-4-\ell\times \frac{1}{6}-\ell_2-\ell_3\times \frac{1}{2}}{k-\ell_2-\ell_3-\lceil\frac{\ell}{2}\rceil}$  $=$
$1+\frac{\frac{\ell_3}{2}+ \lceil\frac{\ell}{2}\rceil-\frac{\ell}{6}-4}{k-\ell_2-\ell_3-\lceil\frac{\ell}{2}\rceil}$  $\geq$ $1+\frac{\frac{1}{2}-4}{k-\ell_2-1}\geq \frac{1}{8}$.

(6) $\{d_H(v_1), d_H(v_2)\}=\{4^+,4^+\}$. First, if $\{d_H(v_1), d_H(v_2)\}=\{4,4\}$, then $\ell_2+\ell_3\leq \lceil\frac{k-5}{2}\rceil$, and $F$ has at least $k-4-\ell\times \frac{1}{6}-\ell_2-\ell_3\times \frac{1}{2}$  after r1 to r4. Clearly, in this case each of $v_1$ and $v_2$ can be incident with two $(4^-,4^-,5^+)$-faces that are adjacent to $F$.  When $F$ is a 7-face, it has that $F$ is incident with at most one $3^-$-vertices. Therefore, $F$ gives $v$ at least $(7-4-1-4\times \frac{1}{6})\times \frac{1}{4}=\frac{1}{3}$. When $F$ is a 8-face, it has that $F$ is incident with at most two $3^-$-vertices. Particularly, when $F$ is incident with exactly two $2$-vertices, we have that neither of these two 2-vertices is incident with a 3-face by Lemma \ref{lemma2}. Therefore, $F$ gives $v$ at least $(8-4-1-\frac{1}{2}-4\times \frac{1}{6})\times \frac{1}{4}=\frac{11}{24}$ ($F$ is incident with a 2-vertex and a 3-vertex). When $F$ is a $9^+$-face,  $F$ gives $v$ at least
$\frac{k-4-\ell\times \frac{1}{6}-\ell_2-\ell_3\times \frac{1}{2}}{k-\ell_2-\ell_3-\lceil\frac{\ell}{2}\rceil}$ $\geq$ $\frac{k-4-4\times \frac{1}{6}-\lceil\frac{k-5}{2}\rceil}{k-2-\lceil\frac{k-5}{2}\rceil}$ $\geq \frac{7}{15}$ ($k=9$).
Second, if $\{d_H(v_1), d_H(v_2)\}=\{4,5^+\}$, say $d_H(v_1)=4$, then $v_1$ can be incident with two $(4^-,4^-,5^+)$-faces, and $\ell_2+\ell_3\leq \lceil\frac{k-4}{2}\rceil$. When $F$ is a 7-face, we have that $F$ is incident with at most two $3^-$-vertices. Particularly, when $F$ is incident with exactly two $2$-vertices, it has that no such 2-vertices is incident with a 3-face by Lemma \ref{lemma2}. Therefore, $F$ gives $v$ at least $(7-4-1-\frac{1}{2}-2\times\frac{1}{6})\times \frac{1}{4}=\frac{7}{24}$. When $F$ is a $8^+$-face, we have that $F$ has at least $k-4-\ell\times \frac{1}{6}-\ell_2-\ell_3\times \frac{1}{2}$  after r1 to r4, and $F$ gives $v$ at least
$\frac{k-4-\ell\times \frac{1}{6}-\ell_2-\ell_3\times \frac{1}{2}}{k-\ell_2-\ell_3-\lceil\frac{\ell}{2}\rceil}$$=$
$1+\frac{\frac{\ell_3}{2}+ \lceil\frac{\ell}{2}\rceil-\frac{\ell}{6}-4}{k-\ell_2-\ell_3-\lceil\frac{\ell}{2}\rceil}$  $\geq$
$1+\frac{\lceil\frac{2}{2}\rceil-\frac{2}{6}-4}{k-\lceil\frac{k-4}{2}\rceil-\lceil\frac{2}{2}\rceil} $=$\frac{1}{3}$ ($k=8$ and $\ell=2$). Third, if $\{d_H(v_1), d_H(v_2)\}=\{5^+,5^+\}$, then $\ell_2\leq \lceil\frac{k-3}{2}\rceil$, and $F$ has at least $k-4-\ell\times \frac{1}{6}-\ell_2-\ell_3\times \frac{1}{2}$  after r1 to r4. Therefore, $F$ gives $v$ at least
$\frac{k-4-\ell\times \frac{1}{6}-\ell_2-\ell_3\times \frac{1}{2}}{k-\ell_2-\ell_3-\lceil\frac{\ell}{2}\rceil}$ $=$
$1+\frac{\frac{\ell_3}{2}+ \lceil\frac{\ell}{2}\rceil-\frac{\ell}{6}-4}{k-\ell_2-\ell_3-\lceil\frac{\ell}{2}\rceil}$
$\geq \frac{1}{5}$.  \qed
\end{proof}

Let $F$ be a $6^+$-face that is adjacent to a $4$-face $S_1$ and a $4^-$-face $S_2$, and $F$, $S_1$ and $S_2$ are incident with a common 6-vertex $v$. If $S_1$ is not incident with a $3^-$-vertex that is adjacent to $v$, then  $F$ gives $v$ at least $\frac{1}{6}$ according to Lemmas \ref{chargeto6-vertex1}, \ref{chargeto6-vertex2}  and  \ref{chargeto6-vertex}.

\begin{lemma} \label{6faces}
Let $S$ be a $k(\geq5)$-face of $H$. Then, $S$ has nonnegative charges after the discharging procedures r1 to r4.
\end{lemma}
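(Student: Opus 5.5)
We will bound the total charge a $k$-face $S$ ($k\ge5$) can send out under r1--r4 and show that it is at most $ch(S)=k-4$. Under r1--r4, a $5^+$-face gives charge only in four ways:
\begin{itemize}
\item to incident 2-vertices: at most $1$ by r1.1 or $\frac12$ by r1.2;
\item to incident 3-vertices: at most $\frac12$ by r2.1 or $\frac13$ by r2.2;
\item to adjacent $(4^-,4^-,5^+)$-faces: $\frac16$ each by r3.3;
\item nothing to 4-faces or to $4^+$-vertices.
\end{itemize}
We therefore estimate the number of $3^-$-vertices on the boundary of $S$, the number of 2-vertices receiving the large amount $1$, and the number of adjacent $(4^-,4^-,5^+)$-faces.

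The structural facts come from Lemma~\ref{lemma1}(1). No two $3^-$-vertices are adjacent, since the degree sum of such an edge would be at most $6<8$. A 2-vertex has both neighbours of degree $6$, and a 3-vertex has both neighbours of degree $5^+$. Hence along the boundary walk of $S$ the $3^-$-vertices are pairwise non-consecutive, so $\ell_2+\ell_3\le\lfloor k/2\rfloor$. Moreover, every edge of $S$ incident with a $3^-$-vertex has its other end of degree at least $5$. Now consider an adjacent $(4^-,4^-,5^+)$-face sharing edge $xy$ with $S$. If both $x,y$ are $4^-$-vertices, then by Lemma~\ref{lemma1}(1) neither is a $3^-$-vertex, so both are 4-vertices. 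If instead one of $x,y$ is a $3^-$-vertex, then the other has degree $5^+$ and the third vertex must be a $4^-$-vertex, which is possible. The edges of $S$ are partly used by the $3^-$-vertices, and this should give a bound of the shape $\ell\le k-2(\ell_2+\ell_3)$, the same as the one used in Lemma~\ref{chargeto6-vertex}. Writing $\ell_2'$ for the number of 2-vertices on 3-faces, the total outgoing charge is at most
\[
\ell_2'+\tfrac12(\ell_2-\ell_2')+\tfrac12\ell_3+\tfrac16\ell .
\]

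Cases with $k\ge7$ should follow from the crude estimate with $\ell_2'\le\ell_2$. Here the main care is needed with 2-vertices on 3-faces. Such a 2-vertex $w$ with neighbours $u_1,u_2$ (6-vertices forming the 3-face $wu_1u_2$) takes $1$ from $S$. However, by condition (2) of Theorem~\ref{mainresult} (no 4-cycle adjacent to a 3-cycle), together with Lemma~\ref{lemma2}, these cannot be too dense. If two such 2-vertices lie on $S$ at distance 2 along $S$, say via a common 6-vertex, then that 6-vertex has two 2-neighbours, one of them lying on a (2,6,6)-face, which contradicts Lemma~\ref{lemma2}. So such 2-vertices are separated by at least two other boundary vertices. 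With these bounds I would verify $k-4\ge \ell_2'+\frac12(\ell_2+\ell_3-\ell_2')+\frac16\ell$, treating $k=7,8$ explicitly and general $k\ge9$ by inequality.

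The main obstacle is $k=5$ and $k=6$, where the slack $k-4\in\{1,2\}$ is tiny. For $k=5$ there are at most two $3^-$-vertices. The key is that a 2-vertex $w$ on a 5-face that also lies on a 3-face $wu_1u_2$ creates, together with $S$, a 4-cycle or a chord. Indeed, $u_1u_2$ is an edge, so $S$ minus $w$ plus the edge $u_1u_2$ gives a 4-cycle adjacent to the triangle $wu_1u_2$, which is forbidden by (2). So on a 5-face all 2-vertices take only $\frac12$. Then the worst case is two $3^-$-vertices, costing at most $\frac12+\frac12=1$, and in that configuration the remaining vertices are $5^+$ or the single free edge admits no $(4,4)$ pair, so $\ell=0$. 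With one $3^-$-vertex the cost is at most $\frac12+\frac16\cdot2\le1$, and with none it is at most $\frac56$. For $k=6$ the same chord argument shows a 2-vertex on a 3-face would produce a 5-cycle through $u_1u_2$; I would check which forbidden configuration arises, for example a 4-cycle adjacent to a triangle at a 6-vertex (condition (2), third part). Failing that, I would use Lemma~\ref{lemma2} to limit such 2-vertices to at most one, and then run a short case enumeration on $(\ell_2,\ell_3)\le 3$, bounding the charge by $1+\frac12+\frac12\le2$ and $\ell$ accordingly.
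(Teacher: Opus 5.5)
Your route is the paper's route: bound what $S$ sends to incident $3^-$-vertices and to adjacent $(4^-,4^-,5^+)$-faces, using Lemma~\ref{lemma1}(1), Lemma~\ref{lemma2} and condition (2). The paper does exactly this three-case count for $k=5$. For $k\ge 6$ it points to the computations of Lemmas~\ref{chargeto6-vertex1}--\ref{chargeto6-vertex}, which rest on $\ell\le k-2(\ell_2+\ell_3)$.

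\textbf{The key structural step is wrong as stated.} You say that if the shared edge $xy$ contains a $3^-$-vertex $x$, the third vertex of the triangle ``must be a $4^-$-vertex, which is possible.'' It is impossible: that third vertex is also adjacent to $x$, so Lemma~\ref{lemma1}(1) forces it to have degree at least $5$. More directly, the two $4^-$-vertices of a $(4^-,4^-,5^+)$-face are adjacent, so Lemma~\ref{lemma1}(1) makes both of them 4-vertices. Hence every such face is a $(4,4,5^+)$-face, and it meets $S$ along an edge with a 4-vertex endpoint and no $3^-$-endpoint. This is what actually proves $\ell\le k-2(\ell_2+\ell_3)$, which you only say ``should'' hold, right after a sentence that contradicts it. It also gives a sharper fact: $\ell=0$ whenever every edge of $S$ avoiding the $3^-$-vertices joins two $S$-neighbours of $3^-$-vertices, since those have degree at least $5$. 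This is not cosmetic, because the decisive cases have zero slack. For example, for $k=5$ with two $3^-$-vertices the cost is $1+\frac{\ell}{6}$, so you need $\ell=0$, yet you only inspect the free edge.

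\textbf{Several case details also need repair.}
\begin{itemize}
\item For $k=5$ with one $3^-$-vertex the bound is $\ell\le 3$, not $2$. The cost $\frac12+\frac36=1$ still suffices, and this is the paper's computation.
\item For $k=6$ your chord argument produces nothing forbidden. The 2-vertex of a $(2,6,6)$-face has its other face of size at least $6$ (size $4$ or $5$ would give a chorded 4-cycle or a 4-cycle adjacent to the triangle), and size $6$ is allowed.
\item Also for $k=6$, Lemma~\ref{lemma2} does not limit such 2-vertices to one; it only excludes a second 2-vertex at distance $2$ along $S$. Two such 2-vertices at $v_1,v_4$ of $S=v_1\cdots v_6$ are excluded instead by condition (2), because $v_2v_3v_5v_6$ is a 4-cycle adjacent to the triangle $v_1v_2v_6$. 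Alternatively, in that case $\ell=0$ and the cost is exactly $2$. With these facts your enumeration works; for instance, three $3^-$-vertices cost at most $1+\frac12+\frac12$ by Lemma~\ref{lemma2}, with $\ell=0$.
\item For $k=7$ the crude estimate fails as stated: three $3^-$-vertices at $1$ each plus $\ell\le 1$ gives $3+\frac16>3$. You need either $\ell=0$ (the only free edge $v_6v_7$ joins two $5^+$-vertices) or Lemma~\ref{lemma2}.
\item For $k\ge 8$ the crude bound $(\ell_2+\ell_3)+\frac16\bigl(k-2(\ell_2+\ell_3)\bigr)\le\frac k2\le k-4$ is fine.
\end{itemize}
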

\begin{proof}
When $k\geq 6$, the conclusion hold by a similar proof as those in Lemmas \ref{chargeto6-vertex1}, \ref{chargeto6-vertex2} and \ref{chargeto6-vertex}. In the following, we consider the case of $k=5$.

Case 1. $S$ is not incident with any 2-vertex. If $S$ is not incident with a 3-vertex, then $S$ is adjacent to at most five ($4^-,4^-,5^+$)-faces. Hence, $S$  has at least $5-4-5\times \frac{1}{6}=\frac{1}{6}$ charges after r1 to r4. If  $S$ is incident with one 3-vertex, then $S$ is adjacent to at most three ($4^-,4^-,5^+$)-faces. So, $S$ has at least $5-4-\frac{1}{2}-3\times \frac{1}{6}$=$0$ charges after r1 to r4. If  $S$ is incident with two 3-vertices, then $S$ is not adjacent to any ($4^-,4^-,5^+$)-face. So, $S$ has at least $5-4-2\times \frac{1}{2}$=$0$ charges after r1 to r4. 

Case 2. $S$ is  incident with exactly one 2-vertex $x$. Obviously, $x$ is not incident with a 3-face. Otherwise, $H$ contains a 4-cycle that is adjacent to a 3-cycle, and a contradiction. If $S$ is not incident with a 3-vertex, then $S$ is adjacent to at most three ($4^-,4^-,5^+$)-faces. So,  $S$ has at least $5-4-\frac{1}{2}-3\times \frac{1}{6}$=$0$ charges after r1 to r4.  If  $S$ is incident with one 3-vertex  (note that $S$ is incident with at most one 3-vertex in this case), then $S$ is not adjacent to any ($4^-,4^-,5^+$)-face. Therefore,  $S$ has at least $5-4-\frac{1}{2}-\frac{1}{2}=0$ charges after r1 to r4.

Case 3. $S$ is  incident with exactly two 2-vertex  (note that $S$ is incident with at most two 2-vertices). Obviously, $S$ is not incident with any 3-vertex, and   by Lemma \ref{lemma1} (1)   $S$ is not adjacent to any ($4^-,4^-,5^+$)-face. Therefore, $S$ has at least $5-4-\frac{1}{2}-\frac{1}{2}=0$ charges after r1 to r4.  \qed
\end{proof}

\vspace{0.2cm}

\subsection{Final charge of vertices}
Let $v\in V(H)$ be a vertex of $H$. Obviously, $d_H(v)\geq 2$ by Lemma \ref{lemma1} (1).

Let $v$ be a 2-vertex. Then $v$ has two 6-neighbors by Lemma \ref{lemma1} (1). Clearly, by r1 we have $ch'(v)$=-2+2=0.

Let $v$ be a 3-vertex. If $v$ is incident with a 3-face, then $v$ is incident with two $5^+$-faces by the assumption of $H$. So, by r2.1, $ch'=-1+2\times \frac{1}{2}=0$. If $v$ is not incident with any 3-face, then  $v$ is incident with three $4^+$-faces. By r2.2 it has that $ch'(v)=-1+3\times \frac{1}{3}=0$.

Let $v$ be a 4-vertex. By  r1 to r4, we have $ch'(v)=ch(v)=0$.

Let $v$ be a 5-vertex.
When $v$ is not incident with any 3-face, we have that $v$ is incident with at most five 4-faces. So, by r4, $ch'(v)\geq 1-5\times \frac{1}{5}$=0.
When $v$ is incident with 3-faces, let $n'_3 (\geq 1)$ and $n'_4$ be the number of 3-faces and 4-faces incident with $v$, respectively. If $n'_3=1$, then $n'_4\leq 2$ (otherwise there is a 4-face that is adjacent to a 3-face, and a contradiction). So, by r3 and r4, $ch'(v)\geq 1-\frac{1}{2}-2\times\frac{1}{5}$=$\frac{1}{10}$. If $n'_3$=2, then $n'_4=0$. Therefore, by r3 we have $ch'(v)\geq 1-2\times \frac{1}{2}$=$0$.


Let $v$ be a 6-vertex. Denote by $n_3$ and $n_4$ the number of 3-faces and 4-faces incident with $v$, respectively. Obviously, $n_3+n_4\leq 3$ by the assumption of $H$.
If $v$ is not adjacent to a 2-vertex, then it is clear that $ch'(v)\geq 6-4-\frac{1}{2}-\frac{1}{2}-\frac{1}{2}=\frac{1}{2}$. If $v$ is adjacent to a 2-vertex that is incident with a 3-face, then $v$ is adjacent to only one 2-vertex by Lemma \ref{lemma2}. Therefore,  $ch'(v)\geq 6-4-\frac{1}{2}-\frac{1}{2}-\frac{1}{2}-\frac{1}{2}=0$ by r1, r3 and r4. So, in what follows we assume that $v$ is adjacent to at least two 2-vertices, and no such 2-vertex is incident with a 3-face. Clearly, in this case $v$ gives its adjacent 2-neighbors at most 1 by r1.

If $n_3+n_4\leq 2$, then $ch'(v)\geq 6-4-1-\frac{1}{2}-\frac{1}{2}=0$ by r1, r3 and r4.
If $n_3+n_4=3$, then denote by $F_1, F_2,F_3$ the three $5^+$-faces incident with $v$.
When $n_3\leq 1$, it has that $F_1,F_2,F_3$ are $6^+$-faces by the condition (2) of Theorem \ref{mainresult}.  Given that $v$ is adjacent to at least two 2-vertices, we have that $v$ is incident with at most one (3,5,3,6)-face and exactly two (2,6,$3^+$,6)-faces, and by Lemmas  \ref{chargeto6-vertex1}, \ref{chargeto6-vertex2} and \ref{chargeto6-vertex}  $v$ can receive at least $\frac{1}{6}+\frac{1}{8}+\frac{1}{8}=\frac{10}{24}$ from $F_1$, $F_2$ and $F_3$. Therefore, $ch'(v)\geq 6-4-1-\frac{1}{2}-\frac{5}{12}-\frac{5}{12}+\frac{10}{24}=\frac{1}{12}$ by r1, r3 and r4. When $n_3=3$, we have that $v$ is not incident with any 2-vertex. Therefore,  $ch'(v)\geq 6-4-\frac{1}{2}-\frac{1}{2}-\frac{1}{2}=\frac{1}{2}$ by r3.

Now, we consider the case of   $n_3=2$ and $n_4$=1. Obviously, in this case $v$ is incident with only one 2-vertex  by Lemma \ref{lemma3}, say $v_1$. Denote by $S_2,S_3$ the two $3$-faces incident with $v$, $S_1$ the 4-face incident with $v$,  and $F_1,F_2,F_3$ the other three faces incident with $v$; See Figure \ref{figureadd}.   Obviously, $v_1$ is incident with $S_1$, and $F_1,F_2$ are $6^+$-faces by the condition (2) of Theorem \ref{mainresult}.

By Lemma \ref{lemma3}, $d_H(v_2)\geq 3$. First, when $d_H(v_2)\geq 4$,  $v$ can receive at least $\frac{1}{6}+\frac{1}{8}=\frac{7}{24}$ from $F_1$ and $F_2$ by Lemmas  \ref{chargeto6-vertex2} and \ref{chargeto6-vertex}. Therefore, $ch'(v)\geq 6-4-1-\frac{1}{2}-\frac{1}{2}-\frac{1}{4}+\frac{7}{24}=\frac{1}{24}$ by r1, r3 and r4. Second, when $d_H(v_2)=3$. Consider  $F_1$: if $d_H(v_6)\neq 3$, then $F_1$  gives $v$ at least $\frac{2}{9}$  by Lemmas \ref{chargeto6-vertex2} and \ref{chargeto6-vertex}; if $d_H(v_6)=3$,  then $F_1$  gives $v$ at least $\frac{1}{6}$  by Lemmas \ref{chargeto6-vertex2} and \ref{chargeto6-vertex}.
Consider  $F_2$: if $F_2$ is a 6-face, then $F_2$ gives $v$ at least $\frac{2}{9}$ (or $\frac{1}{4}$) when $d_H(v_3)=3$ (or $d_H(v_3)\geq 4$) by Lemma \ref{chargeto6-vertex2}; if $F_2$ is a 7-face, then $F_2$ is incident with at most one
2-vertices that are incident with a 3-face. Otherwise, there exists a 4-cycle $S_1$ that is adjacent to a 5-cycle. Therefore, $F_2$ gives $v$ at least $\frac{7-4-1-\frac{1}{2}-\frac{1}{2}}{4}=\frac{1}{4}$. If $F_2$ is a $8^+$-face, then $F_2$ gives $v$ at least $\frac{k-4-\ell\times \frac{1}{6}-\ell_2-(\ell_3-1)\times \frac{1}{2}- \frac{1}{3}}{k-\ell_2-\ell_3-\lceil\frac{\ell}{2}\rceil}$ $\geq \frac{1}{3}$ (note that in this case $\ell_2\leq \lceil\frac{k-4}{2}\rceil$), where $\ell$, $\ell_2$ and $\ell_3$ are the number of $(4^-,4^-,5^+)$-faces adjacent to $F_2$, 2-vertices incident with $F_2$ and 3-vertices incident with $F_2$, respectively.
Therefore,  $v$ can receive at least $\min \{\frac{1}{4}+\frac{1}{6},\frac{2}{9}+\frac{2}{9}\}=\frac{5}{12}$ from $F_1$ and $F_2$ when $d_H(v_3)\geq 4$ or $d_H(v_6)\geq 4$, and  receive at least $\frac{1}{6}+\frac{2}{9}=\frac{7}{18}$ from $F_1$ and $F_2$ when $d_H(v_3)=3$ and $d_H(v_6)=3$. In addition, when  $d_H(v_3)=3$ and $d_H(v_6)=3$, it has that $d_H(v_4)\geq 5$ or $d_H(v_5)\geq 5$ by Lemma \ref{lemma1} (1). In this case, it is easy to see that $F_3$ can give $v$ at least $\frac{1}{8}$. Hence, $v$ can receive at least $\frac{5}{12}$ from $F_1$, $F_2$ and $F_3$, and  $ch'(v)\geq 6-4-1-\frac{1}{2}-\frac{1}{2}-\frac{5}{12}+\frac{5}{12}=0$  by r1, r3 and r4.

\begin{figure}[H]
\centering
  \includegraphics[width=80pt]{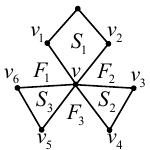}\hspace{0.3cm}\\
   \caption{An Illustration}\label{figureadd}
\end{figure}

\subsection{Final charge of faces}

By Lemma \ref{6faces}, we are sufficient to check that all $3$-faces and $4$-faces have nonnegative charge.

Let $f$ be a 3-face. Since $H$ contains no $(4,4,4)$-face by Lemma \ref{lemma1} (3), it follows that $f$ is incident with at most two $4^-$-vertices by Lemma \ref{lemma1} (1).  Denote by $\ell$ the number of $4^-$-vertices incident with $v$. If $\ell=0$, then by r3.1 $ch'(f)=3-4+3\times \frac{1}{3}=0$; if $\ell=1$, then by r3.2 $ch'(f)=3-4+2\times \frac{1}{2}=0$; if $\ell=2$, then by r3.3 $ch'(f)=3-4+\frac{1}{2}+3\times \frac{1}{6}=0$.

Let $f$ be a 4-face. Obviously, $f$ is not adjacent to a 3-face. If $f$ is not incident with any 2-vertex, then when $f$ is not incident with any 3-vertex, $ch'(f)=ch(f)=0$; when $f$ is  incident with a 3-vertex, since $f$ is not (3,5,3,5)-face by Lemma \ref{lemma4}, it has that $f$ is incident with at least one 6-vertex. Thus, if $f$ is incident with one 3-vertex, then $ch'(f)\geq 3\times \frac{2}{15}-\frac{1}{3}=\frac{1}{15}$ or $ch'(f)\geq 2\times \frac{1}{6}-\frac{1}{3}=0$ by r4.2; if $f$ is incident with two 3-vertices, then $ch'(f)\geq \min \{\frac{7}{15}+\frac{1}{5}-2\times \frac{1}{3}, \frac{1}{3}+\frac{1}{3}-2\times \frac{1}{3}\}=0$ by  r4.2.

In addition, if $f$ is incident with a 2-vertex (note that $f$ is incident with  at most one 2-vertex by Lemma \ref{lemma2}), then when $f$ is incident with a 3-vertex, it has that $ch'(f)\geq 2\times \frac{5}{12}-\frac{1}{2}-\frac{1}{3}=0$ by r4.1; when  $f$ is not incident with a 3-vertex, it follows that $ch'(f)\geq 2\times \frac{1}{4}-\frac{1}{2}=0$ by r4.1.

This completes the proof of Theorem \ref{mainresult}.

\section{Conclusion}

For any planar graph with maximum degree 6, it has been proved that in the case of not containing 4-cycles, it is 7-totally-colorable\cite{Shen2009}.
In this paper, we improve this result by showing that it is still 7-totally-colorable when it contains 4-cycles except some special ones.

For future work, we would like to refine our result further. In fact, the graphs we have studied belong to the so-called type 1  planar graphs,  i.e. graphs with total chromatic number $\Delta+1$. In \cite{Kowalik2008}, Kowalik et al. proved that planar graphs with maximum degree $\Delta \geq 9$ are $(\Delta+1)$-totally-colorable.  Many people conjecture that  planar graphs with $4\leq \Delta\leq 8$ are also type 1 graphs \cite{Shen2009}, since no counterexample has been found. For the case of $\Delta=3$, there do exist planar graphs without a $4$-total-coloring, for example $K_4$ or the graphs shown in Figure \ref{figure3}. However, the characteristics of such graphs are unknown. For the graphs we have studied  an interesting phenomena is that every non 4-totally-colorable planar graph with $\Delta=3$ contains a 4-cycle.  Therefore, we propose the following conjecture.

\begin{figure}[H]
\centering
  \includegraphics[width=67pt]{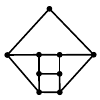}\hspace{0.5cm}
  \includegraphics[width=55pt]{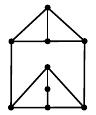}\hspace{0.7cm}
   \includegraphics[width=70pt]{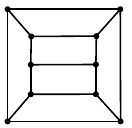}
  \caption{Non 4-totally-colorable planar graphs of maximum degree 3 }\label{figure3}
\end{figure}

{\bf{Conjecture 3.1.} }
Let $G$ be a planar graph of maximum degree 3. If $G$ contains no 4-cycle, then $G$ is 4-totally-colorable.


\end{document}